\newcommand{\dd}{\mathrm{d}}
\newcommand{\sdd}{\,\dd}
\newcommand{\abs}[1]{\left\lvert #1 \right\rvert}
\newcommand{\E}{\mathbb{E}}
\newcommand{\R}{\mathbb{R}}
\newcommand{\id}{\textup{Id}}
\newcommand{\diag}{\textup{diag}}
\theoremstyle{plain}
\newtheorem{theorem}{Theorem}[section]
\newtheorem{lemma}[theorem]{Lemma}
\newtheorem{corollary}[theorem]{Corollary}
\theoremstyle{definition}
\newtheorem{definition}[theorem]{Definition}
\newtheorem{example}[theorem]{Example}
\newtheorem{remark}[theorem]{Remark}
\title{State spaces of multifactor approximations of nonnegative Volterra processes}
\author[1]{Eduardo Abi Jaber\thanks{eduardo.abi-jaber@polytechnique.edu.}}
\author[2]{Christian Bayer\thanks{christian.bayer@wias-berlin.de}}
\author[2]{Simon Breneis\thanks{simon.breneis@wias-berlin.de}}
\affil[1]{École Polytechnique, CMAP}
\affil[2]{Weierstrass Institute Berlin}
\date{\today}
\begin{document}

\maketitle

\begin{abstract}
	We show that the state spaces of multifactor Markovian processes, coming from approximations of nonnegative Volterra processes, are given by explicit linear transformation of the nonnegative orthant. We demonstrate the usefulness of this result for applications, including simulation schemes and PDE methods for nonnegative Volterra processes.
\end{abstract}

\begin{description}
\item[Mathematics Subject Classification (2020):]  60H20, 91-10, 	91-08
\item[Keywords:]  Stochastic Volterra equations, Markovian approximations, {rough Heston model}, Viability and Invariance, Simulation, PDEs
\end{description}


\section{Introduction}

Multifactor Markovian approximations for fractional Brownian motion were initially introduced by \citet{carmona1998fractional} and revisited more recently by \citet{abi2019multifactor} in the context of nonnegative stochastic Volterra equations, motivated by rough and Volterra Heston models of \citet{el2019characteristic} and \citet*{abi2019affine}.  Since then, substantial literature has emerged on such multifactor processes for numerical approximation methods (\citet*{alfonsi2021approximation, bayer2023markovian, bayer2023++Weak, bayer2023++++Simulation, chevalier2022american,  harms2019strong}), deep learning approaches (\citet{papapantoleon2024time}), modeling (\citet{abi2019lifting}), and optimal control (\citet*{abi2021linear}).  
It should be noted that such approximations are also heavily used in physics, chemistry and other fields, see, e.g., \citet*{baczewski2013numerical, bochud2007optimal}.

The starting point is a nonnegative  solution  to  the stochastic Volterra equation
\begin{equation}\label{eqn:SVEExponentialKernel}
	Y_t = Y_0 + \int_0^t K(t-s) b(Y_s) \sdd s + \int_0^t K(t-s) \sigma(Y_s) \sdd W_s,
\end{equation}
where the  kernel $K$  is (approximated by) a weighted sum of exponentials of the form 
\begin{align}\label{eq:multiexpkernel}
	K(t)=  \sum_{i=1}^N w_i e^{-x_it}
\end{align}
with positive nodes $\bm x = (x_i)_{i=1,\ldots,N}$ and weights $\bm w = (w_i)_{i=1,\ldots,N}$.
Such series in terms of exponential functions are sometimes known as \emph{Prony series}.
The coefficients 
$b,\sigma:\R\to\R$  are continuous and satisfy the boundary conditions
\begin{equation}\label{eqn:BoundaryCondition}
	b(0) \ge 0\quad\text{and}\quad \sigma(0) = 0,
\end{equation}
to ensure that the process $Y$ remains nonnegative for any $Y_0\geq 0$.

Then, the nonnegative Volterra process $Y$ can be written in the form $Y_t = \sum_{i=1}^N w_i Y^{(i)}_t$, where $\bm Y = (Y^{(i)})_{i=1,\ldots,N}$ is the solution to the $N$-dimensional Stochastic Differential Equation (SDE)
{\begin{equation}
		\dd Y_t^{(i)} = -x_i\left(Y_t^{(i)} - Y_0^{(i)}\right)\sdd t + b(Y_t)\sdd t + \sigma({Y_t}) \sdd W_t,
	\end{equation}
with initial values $Y^{(i)}_0$, such that $\sum_{i=1}^N w_i Y^{(i)}_0 = Y_0$. More generally, we will consider the  $N$-dimensional  SDE 
\begin{equation}\label{eqn:VolMarkov}
		\dd Y_t^{(i)} = -x_i\left(Y_t^{(i)} - y_0^{(i)}\right)\sdd t + b(Y_t)\sdd t + \sigma({Y_t}) \sdd W_t,
	\end{equation}
  with $y^{(i)}_0 \in \R$, which are often, but not necessarily chosen to coincide  with the initial values $Y^{(i)}_0$,  for $i=1,\dots,N$.}

The aim of the paper is to determine a state space of the multifactor Markovian process $\bm Y$.  That is, we want to determine a set $\mathcal{D}\subseteq\R^N$ such that for every starting value $\bm Y_0\in\mathcal{D}$, there exists a  $\mathcal D$-valued  solution $\bm Y$ to  \eqref{eqn:VolMarkov}, that is  $\bm Y_t \in\mathcal{D}$ for all $t\ge 0$ almost surely.

Beyond the mathematical importance of defining the state space of the Markovian process $\bm Y$, the knowledge of the state space is crucial for several practical applications, some of which  have been considered so far:
\begin{itemize}
		\item \textbf{Modeling and Calibration:} The multifactor model \eqref{eqn:VolMarkov} can serve as a model in its own right (and not solely as an approximation of Volterra models), usually for stochastic volatility factors, as seen in the lifted Heston model of \citet{abi2019lifting}. Here, the knowledge of the state space is crucial for calibrating the initial values of \( \bm Y_0 \) directly to market data.
		
	\item \textbf{Simulation accuracy:} The identification of a valid state space allows for more precise  simulation schemes for $\bm Y$. For instance, \citet{bayer2023++++Simulation} generalized a simulation scheme for the square-root process of \citet{lileika2021second} to simulate paths from the dynamics in \eqref{eqn:VolMarkov} with $\sigma(z)=\sqrt{z}$. However, the authors were not able to show that their simulation scheme is well-defined because they did not know the state space. Indeed, when simulating from \eqref{eqn:VolMarkov}, great care has to be taken to ensure that the aggregated process $Y$ does not become negative, or if it does, one has to determine how to proceed with the square root term $\sqrt{Y}$.

	\item \textbf{Efficient Domain Meshing for PDE Solutions:} In \citet{papapantoleon2024time}, for example, the authors use a rejection algorithm that discards simulations failing to satisfy the condition \( \sum_i w_i Y^{(i)}_0 \geq 0 \),  this approach is not precise  and  becomes inefficient in high dimensions.

\end{itemize}

For all these reasons, the geometry of the state space for multifactor processes $\bm Y$ quickly became a central focus in the associated literature for nonnegative Volterra processes.  
For the lifted Heston model of \citet{abi2019lifting}, simulations demonstrated that while individual processes $Y^{(i)}$ might take negative values, their aggregated sum $Y$ remains nonnegative. In this context, two key papers provided abstract characterizations of possible state spaces: one based on the resolvent of the first kind of the kernel by \citet{abi2019markovian}, and the other using the  resolvent of the second kind of the kernel by \citet{cuchiero2020generalized} (we refer to  Section \ref{A:link} for further details on these state spaces and their  connections). While these spaces are valid for a wide range of kernels, they remain somewhat abstract and challenging to make explicit, making it almost impossible to determine the good conditions on the initial values $\bm Y \ge 0$ of the process \( \boldsymbol{Y} \). Finally,  {using the simulation algorithm for the rough Heston model due to \citet*{bayer2023++++Simulation}, we visually represent the process's domain by a sample plot, see Figure \ref{fig:SamplesOfTwoDimensionalVolatilityProcess} -- replicating a similar plot already presented in that paper.}
One can clearly recognize that the sample paths do not only seem to lie in the half-plane $Y = w_1 Y^{(1)} + w_2 Y^{(2)} \ge 0$ marked with the {down-ward oriented} black line, but in an even smaller cone seemingly below the {upward oriented} line $\{\bm y \in \R^2: y_1 \geq y_2 \}$.

\begin{figure}[h]
	\centering
	\includegraphics[width=0.7\linewidth]{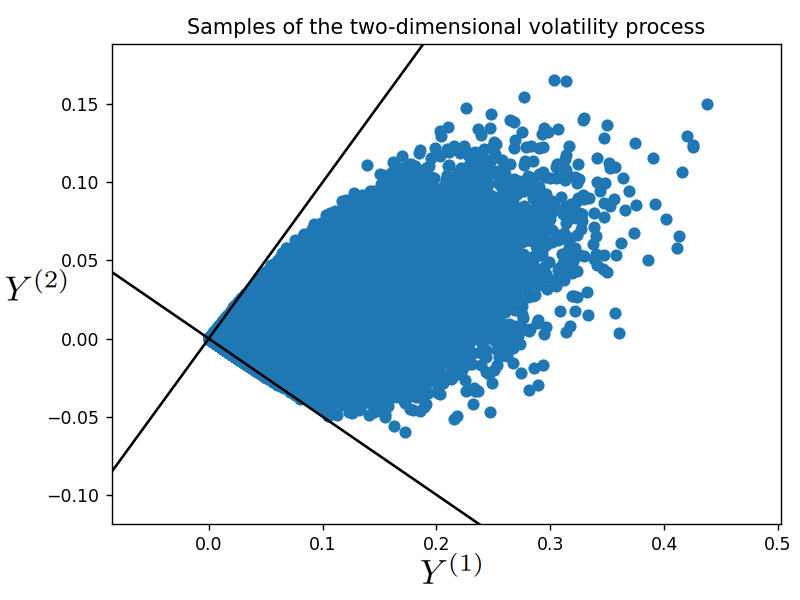}
	\caption{Samples of the two-dimensional process $(Y^{(1)}, Y^{(2)})$ using $10^5$ sample paths on a time grid with $M=1000$ time steps. Plotted are all the points $\bm Y_{t_i}$ for every time step $t_i$, $i=0,\dots,1000$, and all the $10^5$ samples. The {decreasing} black line is the line where the aggregated process $\bm w_1 Y^{(1)} + w_2 Y^{(2)} = 0$, and the aggregated process is positive above that line. The nearly orthogonal second line cuts out a cone, which seems to give the actual domain of the process. Note that no samples actually lie outside the cone inscribed in the figure.}
	\label{fig:SamplesOfTwoDimensionalVolatilityProcess}
\end{figure}

\paragraph{Main contributions.} The main question we are interested in  can be summarized as follows:
\begin{center}
\textit{What constitutes a suitable state space for the multifactor Markovian process $\bm Y$ in \eqref{eqn:VolMarkov}?}
\end{center}

Our main results in Theorem~\ref{thm:SVEDomainTheorem} and Corollary~\ref{cor:SVEDomainTheoremArbitraryy0} establish that this state space can be represented as a linear  transformation of \( \mathbb{R}^N_+ \), and we provide an explicit form for this transformation.

As a first application of this result, we prove that the weak simulation scheme for the rough Heston process proposed by \citet*{bayer2023++++Simulation} is well-defined in the sense that the variance process always stays nonnegative, see Section~\ref{sec:WellDefinitenessOfWeakScheme}.
In Section~\ref{sec:solving_pdes}, we derive the corresponding pricing PDE on the transformed domain $\mathbb{R}^N_+$, and solve it numerically by the finite element method after truncation of the domain. Naturally, knowing the PDE's precise  domain is crucial for accurate numerical approximations.
Finally, in Section~\ref{A:link}, we show how the explicit formula for the domain compares with general, abstract characterizations given in the literature  by \citet*{abi2019markovian} and \citet*{cuchiero2020generalized}.

\paragraph{Notation and Conventions.} We denote by $\bm e_i$  the $i$-th unit vector, which has a $1$ in the $i$-th component, and 0 in every other component, $\bm 1 \coloneqq (1,1,\dots,1)^{\top}$ is the vector with 1 in every component, $\id$ is the identity matrix, $\diag(\bm a)$ for a vector $\bm a \in\R^N$ is the diagonal matrix with entries $\bm a$ in the diagonal, and $\overline w \coloneqq \bm 1^\top \bm w = \sum_{i=1}^N w_i.$ Throughout, italic letters $a$ denote real numbers and bold letters $\bm a$ denote vectors, where we write $\bm a = (a_i)_{i=1}^N$ for the components of $\bm a$. An exception are stochastic processes, where components are denoted by $\bm Y_t = (Y_t^{(i)})_{i=1}^N$ (due to the time variable in the subscript).

\paragraph{Acknowledgments}
EAJ is grateful for the financial support from the Chaires FiME-FDD and Financial Risks at Ecole Polytechnique.
CB and SB gratefully acknowledge the support by the IRTG 2544 ``Stochastic Analysis in Interaction''. CB also acknowledges support from DFG CRC/TRR 388 ``Rough Analysis, Stochastic Dynamics and Related Fields'', Project B02.

\section{State spaces of the multifactor Markovian process}\label{sec:StateSpaceMarkovianApproximation}

Fix $N\geq 1$.  We consider the  $N$-dimensional SDE
\begin{equation}\label{eqn:SVEMarkovCompact}
\dd \bm Y_t = -\diag(\bm x) \left(\bm Y_t-\bm y_0\right)\sdd t + b(\bm w^\top \bm Y_t)\bm 1\sdd t + \sigma(\bm w^{\top}\bm Y_t)\bm 1 \sdd W_t, 
\end{equation}
where $b,\sigma:\R\to\R$  are continuous, satisfy the linear growth condition
\begin{equation}\label{eqn:LinearGrowthCondition}
	\abs{b(y)}\lor\abs{\sigma(y)} \le C\left(1 + \abs{y}\right),\quad y\in\R,
\end{equation}
and the boundary conditions \eqref{eqn:BoundaryCondition}. 
The speeds of mean-reversion $\bm x = (x_i)_{i=1, \ldots, N}$ are positive and ordered, i.e.~$0 < x_1 \le x_2 \le \dots \le x_N$, the weights $\bm w = (w_i)_{i=1, \ldots, N}$ are  positive, $W$ is a one-dimensional Brownian motion, and where $\bm Y_0,\bm y_0\in\R^N$ may be different.  This corresponds to \eqref{eqn:VolMarkov} written in vector form. 

The aim of this section is to determine a state space of the multifactor process  $\bm Y$. That is, we want to determine a set $\mathcal{D}\subseteq\R^N$ such that for every starting value $\bm Y_0\in\mathcal{D}$, there exists a  $\mathcal D$-valued weak solution $\bm Y$ to  \eqref{eqn:SVEMarkovCompact}, that is  $\bm Y_t \in\mathcal{D}$ for all $t\ge 0$ almost surely. In particular, the domain $\mathcal{D}$ should be a subset of the half-plane $\{\bm y\in\R^N\colon \bm w^\top \bm y \ge 0\}$ to ensure non-negativity of the aggregated weighted  process $Y \coloneqq \bm w^\top \bm Y$, which for the specific case $\bm y_0 = \bm Y_0$ would correspond to the Volterra process \eqref{eqn:SVEExponentialKernel} for the weighted sum of exponential kernel $K$ given in \eqref{eq:multiexpkernel}. As illustrated in Figure~\ref{fig:SamplesOfTwoDimensionalVolatilityProcess}, and following the abstract characterizations of  domains of (possibly infinite-dimensional) lifts of nonnegative Volterra processes in \citet{abi2019markovian} and \citet{cuchiero2020generalized},  we suspect the domain $\mathcal{D}$ to be a cone.

\subsection{Main result}

 We prove that the domain $\mathcal D$ is a cone characterized by the set $\mathcal Q$ of admissible matrices, which is defined  as follows.

\begin{definition}\label{def:AdmissibleMatrix}
    A matrix $Q\in\R^{N\times N}$ is called admissible if it satisfies the following assumptions:
    \begin{enumerate}
        \item $Q$ is invertible,
        \item $\bm e_N^\top Q = \bm w^\top $,
        \item $Q\bm 1 = \overline{w}\bm e_N$, where $\overline{w}\coloneqq \bm w^\top \bm 1$,
        \item $(Q\diag(\bm x)Q^{-1})_{i,j} \le 0$ for $i,j\in\{1,\dots,N\}$ with $i\neq j$.
    \end{enumerate}
We denote by $\mathcal Q$ the set of all admissible matrices. 
\end{definition}

Before stating our main theorem, we first show that the set of admissible matrices $\mathcal Q$ is nonempty by providing an explicit example of an admissible matrix.   However, $\mathcal Q$ is not  reduced to a singleton as shown in Example~\ref{ex:InvariantDomainDimension2} below.

\begin{theorem}\label{thm:SpecificQ}
	The matrix $Q=(q_{i,j})_{i,j=1,\ldots N}\in\R^{N\times N}$ given by $$q_{i,j} = w_j,\quad j\le i,\qquad q_{i,i+1}=-\sum_{j=1}^i w_j,\quad i=1,\dots,N-1,$$ and zeros elsewhere is admissible. In particular, $\mathcal Q$ is nonempty. 
\end{theorem}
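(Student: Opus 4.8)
The plan is to verify the four conditions of Definition~\ref{def:AdmissibleMatrix} in turn, the fourth one carrying essentially all of the work. Conditions (2) and (3) are immediate from the explicit entries of $Q$: the $N$-th row is $(w_1,\dots,w_N)=\bm w^\top$, and for $i<N$ the $i$-th row sums to $\sum_{j=1}^i w_j-\sum_{j=1}^i w_j=0$ while the $N$-th row sums to $\overline w$, so $Q\bm 1=\overline w\,\bm e_N$. For the remaining two conditions it is convenient to factorize $Q$. Set $S_i\coloneqq\sum_{j=1}^i w_j$ and $\bm S\coloneqq(S_1,\dots,S_N)$, let $D$ be the lower bidiagonal matrix with $1$'s on the diagonal and $-1$'s on the subdiagonal, and let $U$ be the upper shift matrix ($U_{i,i+1}=1$, all other entries zero). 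Subtracting consecutive rows of $Q$ shows $DQ=\diag(\bm S)(\id-U)$, the upper bidiagonal matrix with diagonal $\bm S$ and superdiagonal entries $-S_1,\dots,-S_{N-1}$; equivalently, $Q=D^{-1}\diag(\bm S)(\id-U)$. Since $D$ and $\id-U$ are triangular with unit diagonal, $\det Q=\det\diag(\bm S)=w_1S_2\cdots S_N>0$, which gives condition (1).

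For condition (4) I would conjugate $\diag(\bm x)$ through this factorization in stages. Write $M\coloneqq Q\diag(\bm x)Q^{-1}=D^{-1}\diag(\bm S)\,A\,\diag(\bm S)^{-1}D$ with $A\coloneqq(\id-U)\diag(\bm x)(\id-U)^{-1}$. Using $(\id-U)^{-1}=\id+U+U^2+\cdots$ (upper triangular with all entries on and above the diagonal equal to $1$), a short computation gives that $A$ is upper triangular with $A_{ii}=x_i$ and $A_{ik}=x_i-x_{i+1}$ for $k>i$; this is precisely where the ordering $x_1\le\cdots\le x_N$ enters, forcing every strictly-upper entry of $A$ to be $\le 0$. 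Conjugation by the positive diagonal matrix $\diag(\bm S)$ merely rescales entries and preserves their signs, so $B\coloneqq\diag(\bm S)A\diag(\bm S)^{-1}$ satisfies $B_{ik}=0$ for $k<i$, $B_{ii}=x_i$, and $B_{ik}=\tfrac{S_i}{S_k}(x_i-x_{i+1})\le 0$ for $k>i$. It remains to conjugate $B$ by the pair $D^{-1},D$: from $(D^{-1})_{ij}=1$ for $j\le i$ (zero otherwise) together with the bidiagonal form of $D$, one obtains the closed form $M_{il}=\sum_{j=1}^i\big(B_{jl}-B_{j,l+1}\big)$, with the convention $B_{j,N+1}\coloneqq 0$.

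The final step is to read off the signs of the off-diagonal entries of $M$. If $l>i$, each summand equals $S_j(x_j-x_{j+1})\big(\tfrac1{S_l}-\tfrac1{S_{l+1}}\big)\le 0$ (with $1/S_{N+1}\coloneqq 0$; here $S_l\le S_{l+1}$ and $x_j\le x_{j+1}$), so $M_{il}\le 0$. If $l<i$, the summands with $j\le l-1$ are again of this nonpositive form, the summand $j=l+1$ equals $-x_{l+1}$, those with $j\ge l+2$ vanish, and the only a priori positive contribution is the one at $j=l$, namely $B_{ll}-B_{l,l+1}=x_l-\tfrac{S_l}{S_{l+1}}(x_l-x_{l+1})$; combining it with the $-x_{l+1}$ coming from $j=l+1$ telescopes to $\tfrac{w_{l+1}}{S_{l+1}}(x_l-x_{l+1})\le 0$, so $M_{il}\le 0$ in this case as well. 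I expect this last case to be the one genuinely delicate point: the summands making up $M_{il}$ are not individually of one sign, and one must pair the diagonal contribution at $j=l$ with the off-diagonal contribution at $j=l+1$ to recognize the cancellation (a quick sanity check in dimension $N=2$, where $M$ has off-diagonal entries $\tfrac{w_1}{S_2}(x_1-x_2)$ and $\tfrac{w_2}{S_2}(x_1-x_2)$, confirms the pattern). Having verified (1)--(4), we conclude $Q\in\mathcal Q$, so $\mathcal Q\neq\emptyset$.
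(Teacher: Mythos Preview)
Your proof is correct. The approach, however, differs from the paper's. The paper writes down an explicit candidate for $Q^{-1}$ (the matrix $R$ with entries $r_{i,N}=1/\overline w$, $r_{i,j}=w_{j+1}/(S_jS_{j+1})$ for $i\le j<N$, $r_{i+1,i}=-1/S_{i+1}$, and zeros below the subdiagonal) and verifies $QR=\id$ entry by entry; it then computes $Q\diag(\bm x)$ and multiplies on the right by $R$, again checking each off-diagonal block $t_{i,N}$, $t_{N,j}$, $t_{i,j}$ for $i<j$ and $j<i$ separately. Your route replaces all of this by the factorization $Q=D^{-1}\diag(\bm S)(\id-U)$, which makes invertibility immediate via the determinant and turns the conjugation $Q\diag(\bm x)Q^{-1}$ into three successive conjugations by matrices with very simple structure. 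The paper's argument is more hands-on and produces $Q^{-1}$ explicitly (which could be useful elsewhere), while yours is more structural and makes the role of the ordering $x_1\le\cdots\le x_N$ slightly more transparent: it enters once, in the sign of the strictly-upper part of $A=(\id-U)\diag(\bm x)(\id-U)^{-1}$, and is then propagated through sign-preserving conjugations and the telescoping step. Both approaches ultimately isolate the same quantities $\sum_{k\le j}w_kx_k-x_{j+1}S_j\le 0$, just packaged differently.
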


\begin{proof}
	The proof is given in Section~\ref{S:proofnonempty}.
\end{proof}

We are now ready to state our main theorem that gives state spaces of the multifactor Markovian process \eqref{eqn:SVEMarkovCompact}.

\begin{theorem}\label{thm:SVEDomainTheorem}
    Let $b,\sigma:\R\to \R$ be continuous functions satisfying the linear growth condition \eqref{eqn:LinearGrowthCondition} and the boundary conditions \eqref{eqn:BoundaryCondition}. Let $Q \in \mathcal Q$ be an admissible matrix in the sense of Definition~\ref{def:AdmissibleMatrix} and suppose that
    \begin{equation}\label{eqn:InitialConditionsCondition}
    \bm y_0 = \mu\diag(\bm x)^{-1}\bm 1\quad \text{for some }\mu\ge 0.
    \end{equation}
Set $\mathcal D =Q^{-1}\R_+^N$. Then, for each $\bm Y_0 \in \mathcal D$, there exists a $\mathcal D$-valued weak solution $\bm Y$ to  \eqref{eqn:SVEMarkovCompact}.
\end{theorem}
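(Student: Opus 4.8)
The plan is to reduce the statement, via the linear change of variables $\bm Z_t \coloneqq Q\bm Y_t$, to the assertion that $\R_+^N$ is an invariant domain for the pushed-forward equation. Since $Q$ is invertible, $\bm Y_0\in\mathcal D = Q^{-1}\R_+^N$ holds iff $\bm Z_0\coloneqq Q\bm Y_0\in\R_+^N$, and (for a fixed Brownian motion $W$ and filtration) a $\mathcal D$-valued weak solution $\bm Y$ of \eqref{eqn:SVEMarkovCompact} exists iff an $\R_+^N$-valued weak solution $\bm Z$ of the transformed equation exists, with $\bm Y = Q^{-1}\bm Z$. Applying $Q$ to \eqref{eqn:SVEMarkovCompact} and writing $A\coloneqq Q\diag(\bm x)Q^{-1}$, admissibility conditions~2 and~3 yield $\bm w^\top\bm Y_t = \bm e_N^\top Q\bm Y_t = Z_t^{(N)}$ and $Q\bm 1 = \overline w\,\bm e_N$, while \eqref{eqn:InitialConditionsCondition} gives $Q\diag(\bm x)\bm y_0 = \mu\, Q\bm 1 = \mu\overline w\,\bm e_N$; hence
\begin{equation}
\dd\bm Z_t = \Bigl(-A\bm Z_t + \overline w\bigl(\mu + b(Z_t^{(N)})\bigr)\bm e_N\Bigr)\sdd t + \overline w\,\sigma(Z_t^{(N)})\,\bm e_N\sdd W_t .
\end{equation}
Because $A$ is a fixed matrix and $b,\sigma$ have linear growth \eqref{eqn:LinearGrowthCondition}, the coefficients of this equation have linear growth, so any solution is non-exploding.

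Next I would verify the boundary (viability) conditions that make $\R_+^N$ invariant for this equation: on each face $\{\bm z\in\R_+^N : z_i = 0\}$ the diffusion must be tangential and the drift must point inward. For the diffusion, its $i$-th component is identically $0$ when $i\neq N$, and when $i=N$ it equals $\overline w\,\sigma(0) = 0$ by the boundary condition \eqref{eqn:BoundaryCondition}; thus the diffusion vector field vanishes on every face it could otherwise cross. For the drift, on $\{z_i=0\}$ its $i$-th component is $-\sum_{j\neq i}A_{ij}z_j$ (the term $-A_{ii}z_i$ drops out), which is $\ge 0$ since $A_{ij}\le 0$ for $j\neq i$ by admissibility condition~4 and $z_j\ge 0$; and for $i=N$ there is the extra term $\overline w(\mu + b(0))\ge 0$, using $\mu\ge 0$, $b(0)\ge 0$ and $\overline w>0$.

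With these conditions verified, the existence of an $\R_+^N$-valued weak solution follows from stochastic invariance theory for closed convex (indeed polyhedral) sets under merely continuous coefficients of linear growth, the hypotheses of such a result being precisely the tangency and inward-drift conditions just checked. Alternatively, one argues directly: mollify $b,\sigma$ to smooth functions with uniform linear growth, then correct these to $b_n,\sigma_n$ with $b_n(0)\ge 0$ and $\sigma_n(0)=0$ (replace $\sigma_n$ by $\sigma_n-\sigma_n(0)$ and $b_n$ by $b_n+(b_n(0))^-$, both corrections vanishing as $n\to\infty$); the corresponding SDEs have non-exploding strong solutions $\bm Z^n$ which remain in $\R_+^N$ by the classical invariance argument for locally Lipschitz coefficients; the uniform moment bounds from linear growth give tightness of $(\bm Z^n)$ on path space, and any subsequential limit solves the martingale problem for the limiting generator and is $\R_+^N$-valued since $\R_+^N$ is closed. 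Transforming back, $\bm Y\coloneqq Q^{-1}\bm Z$ is a $\mathcal D$-valued weak solution of \eqref{eqn:SVEMarkovCompact}.

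The algebraic reductions above are routine once the admissibility conditions are used as indicated; the genuine obstacle is the last step, namely obtaining an $\R_+^N$-valued weak solution when $b,\sigma$ are only continuous (for instance $\sigma(z)=\sqrt z$) and the noise is degenerate. There one must either invoke a non-Lipschitz stochastic invariance theorem whose assumptions match exactly what was verified, or run the approximation scheme, whose delicate points are constructing Lipschitz (or locally Lipschitz, linearly growing) approximants that respect $b_n(0)\ge 0$ and $\sigma_n(0)=0$, proving invariance of $\R_+^N$ for each approximant despite the degeneracy of the diffusion, and identifying the weak limit as a bona fide solution by passing to the limit in the drift term $b_n$ and the quadratic-variation term $\sigma_n^2$.
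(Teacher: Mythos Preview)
Your proposal is correct and follows essentially the same route as the paper: apply the linear change of variables $\bm Z=Q\bm Y$, use admissibility conditions~2--3 together with \eqref{eqn:InitialConditionsCondition} to reduce to the SDE $\dd\bm Z_t=-Q\diag(\bm x)Q^{-1}\bm Z_t\,\dd t+\overline w\,b_\mu(Z_t^{(N)})\bm e_N\,\dd t+\overline w\,\sigma(Z_t^{(N)})\bm e_N\,\dd W_t$, verify the inward-drift/tangential-diffusion conditions on each face of $\R_+^N$ exactly as you do, and conclude. The paper handles the final existence step simply by citing an off-the-shelf invariance result for $\R_+^N$ under continuous, linearly growing coefficients (Da Prato--Frankowska), so your approximation-and-tightness sketch, while correct in spirit, is more than is needed.
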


\begin{proof}
	The proof is given in Section~\ref{S:proofDomainInvariance}.
\end{proof}

\begin{example} For the admissible matrix $Q$ given in Theorem~\ref{thm:SpecificQ},  
	the set  $\mathcal D = Q^{-1} \R_+^N$ corresponds to the set of $\bm y\in \R^N$ such that $\bm w^\top \bm y \ge 0$ and $$\sum_{j=1}^i w_j y_j \ge \sum_{j=1}^i w_j y_{i+1}\qquad\textup{for}\qquad i = 1,\dots,N-1.$$
\end{example}

\begin{remark}
	The domain $Q^{-1}\R_+^N$ is not unique, see  Appendix~\ref{sec:InvariantDomainDimension3}.
\end{remark}

    In practice, for instance for the multifactor approximations of Volterra processes, we often have $\bm Y_0 = \bm y_0$. Hence, it may be interesting to know whether $\bm y_0$ as given in \eqref{eqn:InitialConditionsCondition} is in $Q^{-1}\R_+^N$. We treat this question in a slightly more general context in the following lemma.
    
    \begin{lemma}\label{lem:MMatricesAreFun}
    Let $Q$ be an admissible matrix and let $\bm y_0\in\R^N$ satisfy \eqref{eqn:InitialConditionsCondition}. Then $\bm y_0\in Q^{-1}\R_+^N.$	
    \end{lemma}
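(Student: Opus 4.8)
The goal is to show that $\bm y_0 = \mu \diag(\bm x)^{-1}\bm 1$ satisfies $Q\bm y_0 \in \R_+^N$, i.e.\ every component of the vector $Q\diag(\bm x)^{-1}\bm 1$ is nonnegative (the scalar $\mu \ge 0$ is irrelevant). The plan is to exploit the admissibility properties of $Q$ together with the theory of M-matrices, which is exactly what the lemma's name hints at. Set $A \coloneqq Q\diag(\bm x)Q^{-1}$. By property (4) of Definition~\ref{def:AdmissibleMatrix}, $A$ has nonpositive off-diagonal entries; moreover $A$ is similar to $\diag(\bm x)$, whose eigenvalues $x_1,\dots,x_N$ are strictly positive. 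A matrix with nonpositive off-diagonal entries all of whose eigenvalues have positive real part is a nonsingular M-matrix, and a standard characterization of such matrices is that $A^{-1}$ exists and is entrywise nonnegative. Hence $A^{-1} = Q\diag(\bm x)^{-1}Q^{-1} \ge 0$ entrywise.

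Next I would convert the desired statement into a statement about $A^{-1}$. We have
\[
Q\bm y_0 = \mu\, Q\diag(\bm x)^{-1}\bm 1 = \mu\, Q\diag(\bm x)^{-1}Q^{-1} (Q\bm 1) = \mu\, A^{-1}(Q\bm 1).
\]
By property (3), $Q\bm 1 = \overline w\, \bm e_N$, and $\overline w = \bm w^\top \bm 1 > 0$ since all weights are positive. Therefore
\[
Q\bm y_0 = \mu\,\overline w\, A^{-1}\bm e_N,
\]
which is the last column of $A^{-1}$ scaled by the nonnegative constant $\mu\overline w$. Since $A^{-1}$ is entrywise nonnegative, this vector lies in $\R_+^N$, i.e.\ $\bm y_0 \in Q^{-1}\R_+^N$, as claimed.

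The main technical point to get right is the justification that $A$ is a nonsingular M-matrix and hence has a nonnegative inverse. This requires invoking the correct characterization: for a matrix with nonpositive off-diagonal entries, being a nonsingular M-matrix is equivalent to having all eigenvalues with positive real part, and also equivalent to having a nonnegative inverse (see, e.g., standard references on M-matrices such as Berman--Plemmons or Horn--Johnson). Here the eigenvalues of $A$ are literally $x_1,\dots,x_N > 0$ because $A$ is conjugate to $\diag(\bm x)$, so this hypothesis is immediate; the only care needed is to state the M-matrix equivalence precisely and cite it. Everything else is the short linear-algebra manipulation above using properties (3) and (4) of admissibility (property (2) and invertibility are not needed beyond ensuring $Q^{-1}$ exists).
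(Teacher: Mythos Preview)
Your proposal is correct and follows essentially the same approach as the paper's proof: both compute $Q\bm y_0 = \mu\overline{w}\, Q\diag(\bm x)^{-1}Q^{-1}\bm e_N$ via property (3), observe that $Q\diag(\bm x)Q^{-1}$ is an M-matrix (nonpositive off-diagonal entries by property (4), positive eigenvalues by similarity to $\diag(\bm x)$), and conclude from the nonnegativity of the inverse of an M-matrix.
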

    
 \begin{proof}
	We prove the equivalent statement $Q\bm y_0\in\R_+^N$. First, note that $$Q\bm y_0 = \mu Q\diag(\bm x)^{-1}\bm 1 = \mu Q\diag(\bm x)^{-1}Q^{-1}Q\bm 1 = \mu\overline{w} Q\diag(\bm x)^{-1}Q^{-1}\bm e_N.$$
	
	Recall that in linear algebra, an M-matrix is a square matrix with non-positive off-diagonal entries and with eigenvalues whose real parts are non-negative. Clearly, the matrix $Q\diag(\bm x)Q^{-1}$ is an M-matrix: The non-positivity of the off-diagonal entries holds by assumption, and its eigenvalues are (real and) positive, since it is just the matrix $\diag(\bm x)$ written in a different basis. Now it is well-known that the inverse of an M-matrix has non-negative entries (in fact, this property characterizes M-matrices). Therefore, $$\left(Q\diag(\bm x)Q^{-1}\right)^{-1} = Q\diag(\bm x)^{-1}Q^{-1}$$ has only non-negative entries. In particular, $$Q\bm y_0 = \mu\overline{w}Q\diag(\bm x)^{-1}Q^{-1}\bm e_N\in\R_+^N,$$ proving the lemma.
\end{proof}

    We remark that condition \eqref{eqn:InitialConditionsCondition} on $\bm y_0$ can easily be dropped by using affine transformations of $\R^N_+$ instead of linear ones. We give the specific details in the next corollary.

\begin{corollary}\label{cor:SVEDomainTheoremArbitraryy0}
    Let $b,\sigma:\R\to \R$ be continuous functions satisfying the linear growth condition \eqref{eqn:LinearGrowthCondition} and the boundary conditions \eqref{eqn:BoundaryCondition}. Let $Q \in \mathcal Q$ be an admissible matrix in the sense of Definition~\ref{def:AdmissibleMatrix}, and assume that $\bm w^\top  \bm y_0 \ge 0$. Let $\widetilde{\bm y}_0$ be chosen according to \eqref{eqn:InitialConditionsCondition} with $\bm w^\top \widetilde{\bm y}_0 = \bm w^\top \bm y_0.$ Define the set
    \begin{align}\label{eq:Dlinear}
    	\mathcal{D}= Q^{-1}\R_+^N + (\bm y_0 - \widetilde{\bm y}_0).
    \end{align}
     Then, for any  $\bm Y_0 \in \mathcal{D}$, there exists a $\mathcal D$-valued weak solution $\bm Y$ to \eqref{eqn:SVEMarkovCompact}.
\end{corollary}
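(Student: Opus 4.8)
The plan is to deduce the corollary from Theorem~\ref{thm:SVEDomainTheorem} by a deterministic affine change of variables that moves the mean-reversion level $\bm y_0$ onto $\widetilde{\bm y}_0$, which by construction has the form \eqref{eqn:InitialConditionsCondition}. Concretely, set $\bm c \coloneqq \bm y_0 - \widetilde{\bm y}_0$ and, given a candidate weak solution $\bm Y$ to \eqref{eqn:SVEMarkovCompact}, define $\bm Z_t \coloneqq \bm Y_t - \bm c$ on the same filtered probability space carrying the same Brownian motion $W$. The first step is to check that $\bm Z$ solves \eqref{eqn:SVEMarkovCompact} with mean-reversion level $\widetilde{\bm y}_0$ in place of $\bm y_0$ and with the \emph{same} coefficients $b,\sigma$. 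This rests on two elementary identities: first, $\bm Y_t - \bm y_0 = \bm Z_t - \widetilde{\bm y}_0$, so the mean-reversion drift $-\diag(\bm x)(\bm Y_t-\bm y_0)$ becomes $-\diag(\bm x)(\bm Z_t-\widetilde{\bm y}_0)$; second, $\bm w^\top \bm c = \bm w^\top \bm y_0 - \bm w^\top \widetilde{\bm y}_0 = 0$ by the choice of $\widetilde{\bm y}_0$, hence $\bm w^\top \bm Y_t = \bm w^\top \bm Z_t$, so that the $b$ and $\sigma$ terms are unchanged. Since the drift and diffusion of $\bm Y$ and $\bm Z$ differ only by the deterministic shift, $\bm Z$ is indeed a weak solution of the shifted equation (and conversely).

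The second step is to verify that the shifted equation is covered by Theorem~\ref{thm:SVEDomainTheorem}. The coefficients $b,\sigma$ are untouched, so they still satisfy \eqref{eqn:LinearGrowthCondition} and \eqref{eqn:BoundaryCondition}; the matrix $Q$ is the same admissible matrix; and $\widetilde{\bm y}_0$ satisfies \eqref{eqn:InitialConditionsCondition} by hypothesis. (One should also note in passing that such a $\widetilde{\bm y}_0$ exists whenever $\bm w^\top\bm y_0\ge 0$: writing $\widetilde{\bm y}_0 = \mu\diag(\bm x)^{-1}\bm 1$ one needs $\mu\,\bm w^\top\diag(\bm x)^{-1}\bm 1 = \bm w^\top\bm y_0$, and $\bm w^\top\diag(\bm x)^{-1}\bm 1 = \sum_i w_i/x_i>0$ since $w_i,x_i>0$, so $\mu = \bm w^\top\bm y_0 / \sum_i w_i/x_i \ge 0$ works.)

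The third step is to translate the initial and state-space conditions through the shift. By definition of $\mathcal D$ in \eqref{eq:Dlinear}, $\bm Y_0\in\mathcal D$ is equivalent to $\bm Z_0 = \bm Y_0 - \bm c \in Q^{-1}\R_+^N$. Applying Theorem~\ref{thm:SVEDomainTheorem} to the shifted equation with this initial value yields a $Q^{-1}\R_+^N$-valued weak solution $\bm Z$; setting $\bm Y_t \coloneqq \bm Z_t + \bm c$ then gives, by the first step, a weak solution to \eqref{eqn:SVEMarkovCompact} with $\bm Y_0$ the prescribed value, and $\bm Y_t \in Q^{-1}\R_+^N + \bm c = \mathcal D$ for all $t\ge 0$ almost surely. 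This completes the argument.

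I expect essentially no serious obstacle here: the content of the corollary is entirely in Theorem~\ref{thm:SVEDomainTheorem}, and the only point requiring a line of care is the observation $\bm w^\top(\bm y_0-\widetilde{\bm y}_0)=0$, which is precisely what makes the aggregated process $\bm w^\top\bm Y$ — and hence the nonlinear coefficient arguments — invariant under the shift, so that the transformed equation is genuinely of the same form with the same $b,\sigma$.
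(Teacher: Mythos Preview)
Your proposal is correct and follows essentially the same approach as the paper: both proofs use the deterministic shift by $\bm y_0-\widetilde{\bm y}_0$, invoke the key identity $\bm w^\top(\bm y_0-\widetilde{\bm y}_0)=0$ to see that the coefficients $b,\sigma$ are unchanged, and then apply Theorem~\ref{thm:SVEDomainTheorem} to the shifted equation. Your write-up is in fact slightly more careful, as it explicitly verifies that a $\widetilde{\bm y}_0$ with $\mu\ge 0$ exists.
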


\begin{proof}
    Denote by $\widetilde{\bm Y}$ a weak solution to $$\dd \widetilde{\bm Y}_t = -\diag(\bm x)\left(\widetilde{\bm Y}_t - \widetilde{\bm y}_0\right)\sdd t + b(\bm w^\top  \widetilde{\bm Y}_t)\bm 1\sdd t + \sigma(\bm w^\top \widetilde{\bm Y}_t)\bm 1\sdd W_t$$ with initial condition $\widetilde{\bm Y}_0 \coloneqq \bm Y_0 + \widetilde{\bm y}_0 - \bm y_0.$ Note that $\widetilde{\bm Y}_0\in Q^{-1}\R_+^N$, so Theorem \ref{thm:SVEDomainTheorem}  imply the existence of such a solution $\widetilde{\bm Y}$ that stays in $Q^{-1}\R_+^N$.    Define the process $\bm R \coloneqq \widetilde{\bm Y} + \bm y_0 - \widetilde{\bm y}_0$ and note that $\bm w^\top \bm R = \bm w^\top \widetilde{\bm Y}$. Thus, $\bm R$ satisfies $\bm R_0 = \bm Y_0$ and
    \begin{align*}
    \dd \bm R_t &= -\diag(\bm x)\left(\bm R_t - \bm y_0\right)\sdd t + b(\bm w^\top  \bm R_t)\bm 1\sdd t + \sigma(\bm w^\top \bm R_t)\bm 1\sdd W_t.
    \end{align*}
But this means that $\bm R$ is a solution to \eqref{eqn:SVEMarkovCompact} that stays in $\mathcal{D}$. The corollary follows immediately.
\end{proof}

We now give the specific result for the multifactor square-root process. 

\begin{example}\label{thm:rHestonDomainTheorem}
    Consider the multifactor square-root model
    \begin{equation}\label{eqn:RHestonVolMarkovCompact}
        \dd \bm V^N_t =  - \diag(\bm x)\left(\bm V^N_t - \bm v_0\right)\sdd t + \left(\theta - \lambda \bm w^\top \bm V^N_t\right) \bm 1 \sdd t + \nu \sqrt{\bm w^\top  \bm V^N_t} \bm 1 \sdd W_t.
    \end{equation}
    Let $Q$ be an admissible matrix, and assume that $\bm w^\top \bm v_0 \ge 0$. Then,  $$\mathcal{D}= Q^{-1}\R_+^N + \left(\bm v_0 - \frac{\bm w^\top \bm v_0}{\bm w^\top \diag(\bm x)^{-1}\bm 1}\diag(\bm x)^{-1}\bm 1\right).$$ 
\end{example}


\subsection{Link with nonnegative Volterra processes}

As an application of our result, one can obtain the existence of nonnegative solutions to Volterra equations with kernels of the form \eqref{eqn:SVEExponentialKernel}. We note that such existence can be obtained by working directly on the level of the Volterra equation as done in  \citet*[Theorem 3.6 and Example 3.7]{abi2019affine}. Here, our result provides another alternative as illustrated in the following corollary. 

\begin{corollary}
	 Let $b,\sigma:\R\to \R$ be continuous functions satisfying the linear growth condition \eqref{eqn:LinearGrowthCondition} and the boundary conditions \eqref{eqn:BoundaryCondition}. Let  the kernel $K$  be given by  a weighted sum of exponentials as in \eqref{eq:multiexpkernel}.  Then, for each $Y_0\geq 0$,  the stochastic Volterra equation \eqref{eqn:SVEExponentialKernel}
	 admits a nonnegative weak solution $Y$. 
\end{corollary}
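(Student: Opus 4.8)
The plan is to realize $Y$ as the aggregated process $Y = \bm w^\top \bm Y$ of a carefully initialized multifactor Markovian solution and then read off the claim from Theorem~\ref{thm:SVEDomainTheorem}. First I would fix the initial data: given $Y_0 \ge 0$, observe that $\bm w^\top \diag(\bm x)^{-1}\bm 1 = \sum_{i=1}^N w_i/x_i > 0$, so one may set $\mu \coloneqq Y_0 / \big(\bm w^\top \diag(\bm x)^{-1}\bm 1\big) \ge 0$ and define $\bm y_0 = \bm Y_0 \coloneqq \mu\,\diag(\bm x)^{-1}\bm 1$. By construction \eqref{eqn:InitialConditionsCondition} holds and $\bm w^\top \bm y_0 = Y_0$. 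Fix any admissible $Q \in \mathcal Q$ (nonempty by Theorem~\ref{thm:SpecificQ}); by Lemma~\ref{lem:MMatricesAreFun} we have $\bm Y_0 = \bm y_0 \in \mathcal D \coloneqq Q^{-1}\R_+^N$, so Theorem~\ref{thm:SVEDomainTheorem} furnishes a $\mathcal D$-valued weak solution $\bm Y$ to \eqref{eqn:SVEMarkovCompact} -- which, since $\bm Y_0 = \bm y_0$ here, is precisely the system \eqref{eqn:VolMarkov}. Moreover, property~2 of Definition~\ref{def:AdmissibleMatrix} gives $\bm w^\top Q^{-1}\bm z = \bm e_N^\top \bm z = z_N \ge 0$ for every $\bm z \in \R_+^N$, hence $\mathcal D \subseteq \{\bm y \in \R^N : \bm w^\top \bm y \ge 0\}$ and the aggregated process $Y_t \coloneqq \bm w^\top \bm Y_t$ stays nonnegative for all $t \ge 0$ almost surely.

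Next I would identify $Y$ as a weak solution of \eqref{eqn:SVEExponentialKernel} with the kernel $K$ of \eqref{eq:multiexpkernel}, on the same stochastic basis and driving Brownian motion. For each $i$ the process $Z^{(i)} \coloneqq Y^{(i)} - y_0^{(i)}$ solves $\dd Z^{(i)}_t = -x_i Z^{(i)}_t\sdd t + b(Y_t)\sdd t + \sigma(Y_t)\sdd W_t$ with $Z^{(i)}_0 = 0$; applying Itô's product rule to $e^{x_i t} Z^{(i)}_t$ yields $Z^{(i)}_t = \int_0^t e^{-x_i(t-s)}\big(b(Y_s)\sdd s + \sigma(Y_s)\sdd W_s\big)$, the integrals being well defined pathwise since $s \mapsto Y_s$ is continuous and $b,\sigma$ are continuous. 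Multiplying by $w_i$, summing over the finitely many indices $i$ (so that the interchange with the stochastic integral is mere linearity), and using $\sum_i w_i e^{-x_i(t-s)} = K(t-s)$ together with $\sum_i w_i y_0^{(i)} = \mu\sum_i w_i/x_i = Y_0$, one arrives at exactly \eqref{eqn:SVEExponentialKernel} for the process $Y$.

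Almost all of the substance is already contained in Theorem~\ref{thm:SVEDomainTheorem} and Lemma~\ref{lem:MMatricesAreFun}, so this corollary is essentially bookkeeping and I do not expect a serious obstacle. The one place where the hypothesis $Y_0 \ge 0$ is genuinely used is the choice of initial data -- the constraints \eqref{eqn:InitialConditionsCondition} and $\bm w^\top \bm y_0 = Y_0$ are compatible precisely when $Y_0 \ge 0$ -- while the remainder is the by-now-standard equivalence between a Prony-kernel Volterra equation and its finite-dimensional Markovian lift; in particular this reproduces, by a different route, the existence result of \citet*[Theorem 3.6 and Example 3.7]{abi2019affine}.
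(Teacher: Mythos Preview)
Your proof is correct and follows essentially the same approach as the paper. The only cosmetic difference is that you directly choose $\bm y_0 = \bm Y_0 = \mu\,\diag(\bm x)^{-1}\bm 1$ so that \eqref{eqn:InitialConditionsCondition} holds from the start and Theorem~\ref{thm:SVEDomainTheorem} applies immediately, whereas the paper picks an arbitrary $\bm y_0$ with $\bm w^\top \bm y_0 = Y_0$ and routes through the affine-shifted Corollary~\ref{cor:SVEDomainTheoremArbitraryy0}; since that shift vanishes when $\bm y_0$ already satisfies \eqref{eqn:InitialConditionsCondition}, the two arguments coincide.
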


\begin{proof}
	Fix $Y_0 \geq 0$ and let  $ \bm y_0 \in \R^N$ be such that $\bm w^\top \bm y_0 = Y_0$.  Let $\widetilde{\bm y}_0$ be chosen according to \eqref{eqn:InitialConditionsCondition} with $\bm w^\top \widetilde{\bm y}_0 = \bm w^\top \bm y_0$.  Let $Q\in \mathcal Q$ be an admissible matrix, for instance given by Theorem~\ref{thm:SpecificQ}. Then, it follows from Lemma~\ref{lem:MMatricesAreFun} that $ \widetilde{\bm y}_0\in Q^{-1}\R$. Hence, $\bm y_0 =  \widetilde{\bm y}_0  + (\bm y_0 - \widetilde{\bm y}_0) \in \mathcal D$, with $\mathcal D$ given by \eqref{eq:Dlinear}. An application of Corollary~\ref{cor:SVEDomainTheoremArbitraryy0}, with the starting value  $Y_0= \bm y_0 \in \mathcal D$, yields the existence of a $\mathcal D$-valued weak solution $\bm Y$ to the equation \eqref{eqn:SVEMarkovCompact}. Thanks to the variation of constants formula, we can re-write the equation in the form 
	\begin{equation}
		\bm Y_t =  \bm y_0 + \int_0^t  \exp(-\diag(\bm x)(t-s)) \bm 1\left( b(\bm w^\top \bm Y_s) \sdd s  +   \sigma(\bm w^{\top}\bm Y_s) \sdd W_s\right), 
	\end{equation}
so that the process $Y$ defined by $Y = \bm w^\top \bm Y$ solves the equation 
\begin{align*}
	Y_t = Y_0 + \int_0^t \sum_{i=1}^N w_i e^{- x_i(t-s)} \left(b(Y_s) \sdd s + \sigma (Y_s) \sdd W_s\right),
\end{align*}
which is precisely the Volterra equation \eqref{eqn:SVEExponentialKernel} with the kernel $K$ given by \eqref{eq:multiexpkernel}. It remains to argue that, for all $t\geq 0$,  $Y_t$ remains nonnegative by using the fact that $\bm Y_t \in \mathcal D$. Indeed, using Condition 2 of Definition~\ref{def:AdmissibleMatrix} and the fact that  $\bm w^\top \widetilde{\bm y}_0 = \bm w^\top \bm y_0$, we obtain that  
$$ \bm w^\top \mathcal D = \bm e_N^\top Q Q^{-1}\R_+^N + (\bm w^\top \bm y_0 - \bm w^\top \widetilde{\bm y}_0) = \bm e_N^\top \R_+^N = \R_+. $$ 
Hence, for all $t\geq 0$, $Y_t = \bm w^\top \bm Y_t \in \bm w^\top \mathcal D = \R_+ $, which ends the proof. 
\end{proof}

\subsection{On admissible matrices for $N \in \{2,3\}$}

In this section we give examples of admissible matrices.

\begin{example}\label{ex:InvariantDomainDimension2}
    In the case $N=2$, the Conditions 2 and 3 of Definition~\ref{def:AdmissibleMatrix} imply that we are looking for a matrix of the form $$Q = \begin{pmatrix}
        q & -q\\
        w_1 & w_2
    \end{pmatrix},$$
for some $q\neq 0$ to ensure invertibility. 
    Then, $$Q\diag(\bm x) Q^{-1} = \frac{1}{\overline w}\begin{pmatrix}
        w_1x_2 + w_2x_1 & (x_1-x_2)q\\
        w_1w_2(x_1-x_2)q^{-1} & w_1x_1 + w_2x_2
    \end{pmatrix}.$$ Since $x_1 \le x_2$, the last condition in Definition \ref{def:AdmissibleMatrix} is satisfied for any $q > 0$, and indeed, the domain $Q^{-1}\R_+^2$ is independent of the precise choice of $q$ given by
\begin{equation}\label{eqn:DomainEquationDimension2}
        \mathcal{D} = \left\{\bm y\in\R_+^2\colon \bm w^\top  \bm y \ge 0,\ y_1 \ge y_2\right\}.
    \end{equation}
    For the case of the multifactor square-root process \eqref{eqn:RHestonVolMarkovCompact}, the resulting sample paths of $\bm V^2$ and $\bm U \coloneqq Q\bm V^2$ are illustrated in Figure \ref{fig:InvariantDomainDimension2}. Note that we chose the large maturity $T=100$ to give the process more time to explore its domain. Thereby, it is more clearly visible that the domain of $\bm U$ is indeed $\R_+^2$, than if we had set $T=1$.
\end{example}

\begin{figure}[h]
\centering
\begin{minipage}{.5\textwidth}
  \centering
  \includegraphics[width=\linewidth]{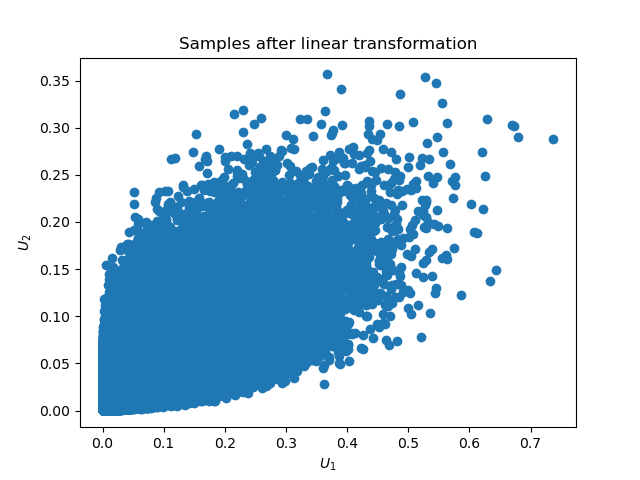}
\end{minipage}%
\begin{minipage}{.5\textwidth}
  \centering
  \includegraphics[width=\linewidth]{Samples_of_volatility_2_png.png}
\end{minipage}
\caption{Samples of $\bm V^2$ (right) and $\bm U$ (left) using $10^3$ sample paths on a time grid with $M=10^5$ time steps. The black lines correspond to the hyperplanes in \eqref{eqn:DomainEquationDimension2}. The parameters used are $\bm x = (1, 10), \bm w = (1, 2), \lambda=0.3, \nu=0.3, V_0=0.02, \theta=0.02, T=100,$ and $\bm v_0=V_0 / (2 \bm x) (w_1/x_1 + w_2/x_2),$ i.e. $\bm v_0$ is chosen to be proportional to $\bm x^{-1}$.}
\label{fig:InvariantDomainDimension2}
\end{figure}

For $N=3$, a similar computation -- relegated to the appendix due to its length -- gives multiple choices of domains. See Appendix~\ref{sec:InvariantDomainDimension3} for details.

\subsection{Proof of Theorem~\ref{thm:SpecificQ}}\label{S:proofnonempty}

In preparation for the proof, we introduce the matrix  $R = (r_{i,j})_{i,j=1, \ldots, N}$ defined by 
\begin{equation}\label{eq:R}
	\begin{aligned}
		r_{i,N} &= \frac{1}{\overline w},\qquad &&i = 1,\dots,N,\\
		r_{i,j} &= \frac{w_{j+1}}{\sum_{\ell=1}^j w_\ell \sum_{\ell=1}^{j+1} w_\ell},\qquad &&i \le j < N,\\
		r_{i+1,i} &= \frac{-1}{\sum_{\ell=1}^{i+1} w_\ell},\qquad &&i = 1,\dots,N-1,\\
		r_{i,j} &= 0,\qquad &&i \ge j + 2,
	\end{aligned}
\end{equation}
that will turn out to be the inverse of $Q$. 
For example, for $N=4$ we have  $$R = \begin{pmatrix}
	\frac{w_2}{w_1(w_1+w_2)} & \frac{w_3}{(w_1+w_2)(w_1+w_2+w_3)} & \frac{w_4}{(w_1+w_2+w_3)(w_1+w_2+w_3+w_4)} & \frac{1}{w_1+w_2+w_3+w_4}\\
	\frac{-1}{w_1+w_2} & \frac{w_3}{(w_1+w_2)(w_1+w_2+w_3)} & \frac{w_4}{(w_1+w_2+w_3)(w_1+w_2+w_3+w_4)} & \frac{1}{w_1+w_2+w_3+w_4}\\
	0 & \frac{-1}{w_1+w_2+w_3} & \frac{w_4}{(w_1+w_2+w_3)(w_1+w_2+w_3+w_4)} & \frac{1}{w_1+w_2+w_3+w_4}\\
	0 & 0 & \frac{-1}{w_1+w_2+w_3+w_4} & \frac{1}{w_1+w_2+w_3+w_4}
\end{pmatrix}$$
\begin{proof}[Proof of Theorem~\ref{thm:SpecificQ}]
	Conditions 2 and 3  of Definition \ref{def:AdmissibleMatrix} are readily satisfied by construction. To argue Condition 1, we will prove that $R$ given in \eqref{eq:R} is actually the inverse of $Q$, i.e.~$QR=\id$. Indeed, consider first the diagonal elements. Here, we have $$(QR)_{NN} = \sum_{k=1}^N q_{Nk}r_{kN} = \sum_{k=1}^N w_k \frac{1}{\overline w} = 1,$$ $$(QR)_{ii} = \sum_{k=1}^N q_{ik}r_{ki} = \sum_{k=1}^i w_k \frac{w_{i+1}}{\sum_{\ell=1}^i w_\ell \sum_{\ell=1}^{i+1} w_\ell} + \left(-\sum_{\ell=1}^i w_\ell\right) \frac{-1}{\sum_{\ell=1}^{i+1} w_\ell} = 1,$$ for $i=1,\dots,N-1$.
	Next, consider off-diagonal elements. We have $$(QR)_{Nj} = \sum_{k=1}^N q_{Nk}r_{kj} = \sum_{k=1}^j w_k \frac{w_{j+1}}{\sum_{\ell=1}^j w_\ell \sum_{\ell=1}^{j+1} w_\ell} + w_{j+1} \frac{-1}{\sum_{\ell=1}^{j+1} w_\ell} = 0,$$ for $j \le N-1$, $$(QR)_{iN} = \sum_{k=1}^N q_{ik}r_{kN} = \sum_{k=1}^i w_k \frac{1}{\overline w} + \left(-\sum_{\ell=1}^i w_\ell\right) \frac{1}{\overline w} = 0,$$ for $i\le N-1$, $$(QR)_{ij} = \sum_{k=1}^N q_{ik}r_{kj} = \sum_{k=1}^i w_k \frac{w_{j+1}}{\sum_{\ell=1}^jw_\ell\sum_{\ell=1}^{j+1} w_\ell} + \left(-\sum_{\ell=1}^i w_\ell\right)\frac{w_{j+1}}{\sum_{\ell=1}^jw_\ell\sum_{\ell=1}^{j+1} w_\ell} = 0,$$ for $i < j \le N-1$, and $$(QR)_{ij} = \sum_{k=1}^N q_{ik}r_{kj} = \sum_{k=1}^j w_k \frac{w_{j+1}}{\sum_{\ell=1}^jw_\ell\sum_{\ell=1}^{j+1} w_\ell} + w_{j+1}\frac{-1}{\sum_{\ell=1}^{j+1} w_\ell} = 0,$$ for $j < i \le N-1$. In particular, this proves that $R = Q^{-1}$.
	
	Finally, we verify  Condition 4 of Definition~\ref{def:AdmissibleMatrix} by direct computations. First,  it is easily verified that we have $Q\diag(\bm x) = S \coloneqq (s_{i,j})_{i,j=1,\ldots, N}$ with $$s_{ij} = q_{ij}x_j = w_jx_j,\qquad j \le i,\qquad s_{i,i+1} = -x_{i+1}\sum_{\ell=1}^i w_\ell,\qquad i = 1,\dots,N-1.$$ Now, let us compute $Q\diag(\bm x) Q^{-1} = T \coloneqq (t_{i,j})_{i,j=1,\ldots,N}.$
	
	 We have 
     $$
     t_{i,N} = \sum_{k=1}^N s_{i,k} r_{k,N} = \frac{1}{\overline{w}} \left(\sum_{k=1}^i w_kx_k - x_{i+1}\sum_{\ell=1}^i w_\ell\right) \le 0 
     $$ 
     for $i=1,\dots,N-1$, since the $x_i$ are ordered increasingly. Similarly, 
     $$
     t_{N,j} = \sum_{k=1}^N s_{N,k}r_{k,j} = \sum_{k=1}^j x_kw_k \frac{w_{j+1}}{\sum_{\ell=1}^j w_\ell \sum_{\ell=1}^{j+1} w_\ell} + x_{j+1}w_{j+1} \frac{-1}{\sum_{\ell=1}^{j+1}w_\ell} \le 0 
     $$
     for $j=1,\dots,N-1$. Next, 
     $$
     t_{i,j} = \sum_{k=1}^N s_{i,k} r_{k,j} = \left(\sum_{k=1}^i w_kx_k - x_{i+1}\sum_{\ell=1}^i w_\ell\right)\frac{w_{j+1}}{\sum_{\ell=1}^j w_\ell \sum_{\ell=1}^{j+1} w_\ell} \le 0 
     $$ 
     for $i < j \le N-1$. Finally, 
     $$
     t_{i,j} = \sum_{k=1}^N s_{i,k} r_{k,j} = \sum_{k=1}^j w_kx_k\frac{w_{j+1}}{\sum_{\ell=1}^j w_\ell \sum_{\ell=1}^{j+1} w_\ell} + w_{j+1}x_{j+1}\frac{-1}{\sum_{\ell=1}^{j+1} w_\ell} \le 0 
     $$ 
     for $j < i \le N-1$. This verifies Condition 4 of Definition \ref{def:AdmissibleMatrix} and proves the theorem.
\end{proof}

\subsection{Proof of Theorem~\ref{thm:SVEDomainTheorem}}\label{S:proofDomainInvariance}

Fix an admissible matrix $Q \in \mathcal Q$. The main idea of the proof is to reduce the study to the  process $\bm Z=Q \bm Y$ and prove that its associated SDE admits an $\mathbb R^N_+$-valued solution. 

We start by writing the SDE for $\bm Z$. For this we first observe that due to \eqref{eqn:InitialConditionsCondition}, $\bm Y$ satisfies 
$$\dd \bm Y_t = -\diag(\bm x)\bm Y_t \sdd t+ {b}_{\mu}(\bm w^\top \bm Y_t)\bm 1\sdd t + \sigma(\bm w^\top \bm Y_t)\bm 1\sdd W_t$$ 
where ${b}_{\mu}(z) = b(z) + \mu$. Using $Q$ as a transformation of basis (in the sense $\bm Z = Q\bm Y$), we get, thanks to the invertibility of $Q$, the following SDE
\begin{equation*}
	\dd \bm Z_t = -Q\diag(\bm x) Q^{-1}\bm Z_t\sdd t + b_{\mu}(\bm w^\top  Q^{-1}\bm Z_t)Q\bm 1\sdd t + \sigma(\bm w^\top Q^{-1}\bm Z_t)Q\bm 1 \sdd W_t.
\end{equation*}
Using the admissibility conditions 2 and 3 in  Definition~\ref{def:AdmissibleMatrix}, we have  that  $\bm w^\top Q^{-1} = \bm e_N^\top QQ^{-1} = \bm e_N^\top $ and $Q\bm 1 = \overline{w}\bm e_N$, which  simplifies the equation to 
\begin{align}
	\dd \bm Z_t &= -Q\diag(\bm x) Q^{-1}\bm Z_t\sdd t + \overline{w}b_{\mu}(Z^{(N)}_t)\bm e_N\sdd t + \overline{w}\sigma(Z^{(N)}_t)\bm e_N \sdd W_t.\label{eqn:SVEMarkovTransformedSimplified}
\end{align}
Recall that $Z^{(N)}$ is the $N$-th component of $\bm Z$. 

In order to prove Theorem~\ref{thm:SVEDomainTheorem}, it suffices to prove that for each $\bm Z_0 \in \R_+^N$, there exists an $\R_+^N$-valued $\bm Z$ weak solution to \eqref{eqn:SVEMarkovTransformedSimplified}. In particular, this would hold for any initial value of the form $\bm Z_0 = Q\bm Y_0$ with $Y_0 \in \mathcal D = Q^{-1}\R_N^+$  and setting $\bm Y = Q Z$, one obtains a $\mathcal D$-valued weak solution $\bm Y$ to  \eqref{eqn:SVEMarkovCompact} started at $\bm Y_0$. 

Hence, this boils down to establish that the set  $\R_+^N$ is stochastically viable with respect to the equation~\eqref{eqn:SVEMarkovTransformedSimplified}. Viability and invariance theory for stochastic differential equations have been extensively studied in the literature in various contexts and with different assumptions on the domain and the coefficients, we refer to  \citet*{abi2019stochastic, da2004invariance, da2007stochastic} and the references therein.

For the nonnegative orthant $\R^N_+$ the characterization in terms of the coefficients is very simple and means that, at boundary points, the diffusive coefficient has to be tangential to the boundary and the drift inward pointing. This is summarized in the following lemma. 

\begin{lemma}\label{L:invorthant}
	Let $\tilde b,\tilde \sigma:\R^N \to \R^N$ be continuous satisfying the growth conditions $$\|\tilde b(\bm z)\| + \|\tilde \sigma (\bm z)\| \le L(1 + \|\bm y\|),\qquad  \bm z\in\R^N,$$
	and the boundary conditions, for all $\bm z\in \R^N_+$, 
	\begin{align}\label{eq:conditionboundaryorthant}
		z_i= 0 \; \Rightarrow \;  \bm e_i^\top \tilde b(\bm z ) \geq 0 \text{ and } \bm e_i^\top \tilde \sigma(\bm z) = 0,  \quad i=1,\ldots, N, 
	\end{align}
then, for each $\widetilde{\bm Z}_0 \in \R_+^N$, there exists a weak $\R_+^N$-valued solution to the following SDE 
\begin{align*}
\sdd \widetilde{\bm Z}_t = \tilde b\left(\widetilde{\bm Z}_t\right) \sdd t + \tilde \sigma  \left(\widetilde{\bm Z}_t \right)\sdd W_t .
\end{align*}
\end{lemma}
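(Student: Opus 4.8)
The plan is to establish stochastic viability of the nonnegative orthant $\R^N_+$ by an explicit construction rather than by invoking the abstract viability theorems wholesale: one typically proves such statements through a penalization/approximation argument combined with a tightness and martingale-problem limiting procedure. First I would regularize the coefficients: since $\tilde b,\tilde\sigma$ are only continuous, I would either work directly with the martingale problem or, more concretely, approximate $\tilde b$ and $\tilde\sigma$ by smooth (say, locally Lipschitz) coefficients $\tilde b^{(n)},\tilde\sigma^{(n)}$ that still respect the boundary condition \eqref{eq:conditionboundaryorthant} on $\R^N_+$ — for instance by mollification after reflecting appropriately, or by passing to the coordinates and using the structure. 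For each fixed $n$ one gets a unique strong solution (up to an explosion time ruled out by the linear growth bound), and the core task is to show this solution stays in $\R^N_+$.

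The key step — and the one I expect to be the main obstacle — is the one-dimensional comparison/boundary-nonattainment argument applied coordinatewise. Fix a coordinate $i$ and look at $\widetilde Z^{(i)}$. On the set $\{\widetilde Z^{(i)}_t = 0\}$ the diffusion coefficient $\bm e_i^\top\tilde\sigma(\widetilde{\bm Z}_t)$ vanishes and the drift $\bm e_i^\top\tilde b(\widetilde{\bm Z}_t)$ is nonnegative, so heuristically the process cannot cross zero; making this rigorous in the multidimensional, non-Lipschitz, degenerate setting is delicate because the other coordinates may be anywhere and the vanishing of $\tilde\sigma$ at the face $\{z_i=0\}$ is only guaranteed \emph{on} that face, not in a neighborhood. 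The standard device is to apply Itô's formula to a smooth convex approximation of $z\mapsto (z^{(i)})^-$, or to use the local-time / Tanaka formula: writing $\phi_\epsilon$ for a $C^2$ function approximating $\max(-z,0)$ with $\phi_\epsilon''$ supported near $0$, one computes
\begin{align*}
\E\left[\phi_\epsilon\big(\widetilde Z^{(i)}_{t\wedge\tau}\big)\right] = \E\left[\int_0^{t\wedge\tau}\Big(\phi_\epsilon'\big(\widetilde Z^{(i)}_s\big)\,\bm e_i^\top\tilde b(\widetilde{\bm Z}_s) + \tfrac12\phi_\epsilon''\big(\widetilde Z^{(i)}_s\big)\,\big(\bm e_i^\top\tilde\sigma(\widetilde{\bm Z}_s)\big)^2\Big)\sdd s\right],
\end{align*}
and one controls the second term using that $\big(\bm e_i^\top\tilde\sigma(\bm z)\big)^2 = o(1)$ as $z^{(i)}\to 0$ uniformly on compacts (a consequence of continuity and \eqref{eq:conditionboundaryorthant}), while the first term has the favorable sign when $z^{(i)}$ is near (and below) zero. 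Letting $\epsilon\to 0$ and using Gronwall yields $\E[(\widetilde Z^{(i)}_t)^-]=0$, hence $\widetilde{\bm Z}_t\in\R^N_+$ a.s.\ for the approximate equations — here one should localize with stopping times to handle the growth of coefficients and the fact that the $o(1)$ estimate is only uniform on compact sets.

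Finally I would remove the approximation: the linear growth bound gives uniform moment estimates $\E[\sup_{s\le T}\|\widetilde{\bm Z}^{(n)}_s\|^2]\le C_T$, hence tightness of the laws of the processes $\widetilde{\bm Z}^{(n)}$ on path space (via Kolmogorov–Chentsov / Aldous). Any weak limit point is supported on $C([0,\infty);\R^N_+)$ since $\R^N_+$ is closed, and a standard argument shows it solves the martingale problem associated with $(\tilde b,\tilde\sigma)$ — here continuity of $\tilde b,\tilde\sigma$ is exactly what is needed to pass to the limit in the martingale problem. This produces the desired $\R^N_+$-valued weak solution started from any $\widetilde{\bm Z}_0\in\R^N_+$. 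Alternatively, if one prefers to cite rather than reconstruct, the statement is a special case of the viability results in \citet{abi2019stochastic} (or \citet{da2004invariance, da2007stochastic}) applied to the closed convex cone $\R^N_+$, whose tangent and normal cone structure at a face $\{z_i=0\}$ makes conditions \eqref{eq:conditionboundaryorthant} precisely the required tangency (for $\tilde\sigma$) and inward-pointing (for $\tilde b$) conditions; the main work is then just checking that those abstract hypotheses reduce to \eqref{eq:conditionboundaryorthant} in this case, which is immediate.
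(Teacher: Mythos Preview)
Your proposal is correct, and in fact considerably more detailed than what the paper does: the paper's proof consists of a single line citing \citet[Example 2.7]{da2007stochastic}, which is precisely your ``alternatively'' paragraph at the end. Your constructive sketch via mollification, coordinatewise Tanaka-type estimates, and a martingale-problem tightness limit is a valid route to the same conclusion, and is essentially how the cited viability results are proved in the first place. The one point worth flagging is the issue you yourself identify: condition \eqref{eq:conditionboundaryorthant} is only assumed for $\bm z\in\R^N_+$, so before you know the process stays in the orthant you cannot directly use it when some other coordinate has gone negative; the standard fix is to first replace $\tilde b,\tilde\sigma$ by $\tilde b(\cdot^+),\tilde\sigma(\cdot^+)$ (projection onto $\R^N_+$), prove the modified SDE has an $\R^N_+$-valued solution, and then observe that on $\R^N_+$ the modified and original coefficients agree. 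With that adjustment your argument goes through. The paper's citation approach buys brevity; your approach buys self-containment and makes transparent exactly where continuity, linear growth, and the tangency/inward-pointing conditions are each used.
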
 

\begin{proof}
	See for instance \citet[Example 2.7]{da2007stochastic}.
\end{proof}

We now proceed to the proof of Theorem \ref{thm:SVEDomainTheorem}.
\begin{proof}[Proof of Theorem \ref{thm:SVEDomainTheorem}]
	It remains to apply Lemma~\ref{L:invorthant} on the equation \eqref{eqn:SVEMarkovTransformedSimplified}. For this, we define 
	$$ \tilde b(\bm z) =  -Q\diag(\bm x) Q^{-1}\bm z + \overline{w}b_{\mu}(z_N)\bm e_N  \quad \text{and} \quad \tilde \sigma (\bm z) = \overline{w}\sigma(z_N)\bm e_N , \quad z \in \R^N. $$
	Then, it readily follows from the continuity and growth conditions of $b$ and $\sigma$ that $\tilde b, \tilde \sigma$ are also continuous with at most linear growth conditions. As for the boundary conditions \eqref{eq:conditionboundaryorthant}, we fix $\bm z\in \R^N_+$ such that $z_i=0$ for some $i=1,\ldots, N$.
	
	 $\bullet$ For the diffusion term, we have 
	$$  \bm e_i^\top \tilde \sigma(\bm z)  = \overline{w}\sigma(z_N)  \bm  e_i^\top  \bm e_N  =0, $$
	since $\bm e_i^\top \bm e_N=0$ if $i<N$ and $\sigma(z_N)= \sigma(0)=0$ if $i=N$, where we used the boundary condition on $\sigma$ in \eqref{eqn:BoundaryCondition}. 

	$\bullet$ For the drift term, we first observe that for the same reason  $b_{\mu}(z_N) 	\bm e_i^\top\bm e_N= (b(z_N) + \mu ) \bm e_i^\top\bm e_N \geq 0$, since $b(0) + \mu \geq 0$ thanks to the boundary condition on $b$ in  \eqref{eqn:BoundaryCondition} and the fact that $\mu \geq 0$,  so that we can write  
	\begin{align*}
		\bm e_i^\top \tilde b(\bm z ) &=   - 	\bm e_i^\top  Q\diag(\bm x) Q^{-1}\bm z + \overline{w}b_{\mu}(z_N) 	\bm e_i^\top\bm e_N  \\
		&\geq  - \sum_{j \neq i} (Q\diag(\bm x) Q^{-1})_{ij} z_j \\
		&\geq 0,
	\end{align*}
 where  the first inequality follows from $z_i=0$ and the second inequality follows from the admissibility condition 4 in Definition~\ref{def:AdmissibleMatrix} for the matrix $Q$ and the fact that $z_j\geq 0$. 
 
 This shows that the boundary conditions \eqref{eq:conditionboundaryorthant} are satisfied by $\tilde b, \tilde \sigma$, so that an application of Lemma~\ref{L:invorthant} yields the existence of an $\R_+^N$-valued solution $\bm Z$ to  \eqref{eqn:SVEMarkovTransformedSimplified} for any initial condition $\bm Z_0\in \R_+^N$.  In particular, it holds for the initial value  $\bm Z_0 = Q\bm Y_0$ with $Y_0 \in \mathcal D = Q^{-1}\R^N_+$.  Setting $\bm Y = Q Z$, one obtains a $\mathcal D$-valued weak solution $\bm Y$ to  \eqref{eqn:SVEMarkovCompact} started at $\bm Y_0$ and completes the proof of theorem.  
	\end{proof}

\section{The weak scheme is cone-preserving}
\label{sec:WellDefinitenessOfWeakScheme}

In Section \ref{sec:StateSpaceMarkovianApproximation}, we determined the state space $\mathcal{D}\subseteq \R^N$ of the multifactor square-root process $\bm V^N$ given by \eqref{eqn:RHestonVolMarkovCompact}. Assume now that we approximate the process $\bm V^N$ using the weak simulation scheme proposed in \citet*{bayer2023++++Simulation}. The goal of this section is to prove that the resulting approximation has the same viable domain $\mathcal{D}$ as $\bm V^N$. 

Let us first start by recalling the weak simulation scheme of \citet*{bayer2023++++Simulation}. First, the SDE in \eqref{eqn:RHestonVolMarkovCompact} is split into two parts, one containing the drift and the other the diffusion. Denote by $D(\bm z, h) \coloneqq \bm Z_h \coloneqq (Z^{i}_h)_{i=1}^N$ the solution at time $h$ of the ordinary differential equation (ODE)
\begin{equation}\label{eqn:VMarkovDeterministicPart}
\dd Z^i_t = -x_i(Z^i_t - v^i_0)\sdd t + (\theta - \lambda Z^i_t) \sdd t,\quad Z^i_0 = z^i,\quad i=1,\dots,N,\quad Z_t = \bm{w}^\top \bm{Z}_t,
\end{equation}
and by $S(\bm{y}, h) \coloneqq \bm{Y}_h \coloneqq (Y^i_h)_{i=1}^N$ the solution at time $h$ of the SDE
\begin{equation}\label{eqn:VMarkovStochasticPart}
\dd Y^i_t = \nu\sqrt{Y_t}\sdd W_t,\qquad Y^i_0 = y^i,\qquad i=1,\dots, N,\qquad Y_t = \bm{w}^\top \bm{Y}_t.
\end{equation}

Then, the ODE \eqref{eqn:VMarkovDeterministicPart} is linear and can hence be solved exactly. Therefore, the simulation scheme $\widehat{D}$ for the ODE is simply given by
\begin{equation*}
    \widehat{D}(\bm z, h) \coloneqq D(\bm z, h) \coloneqq e^{Bh} \bm z + B^{-1} (e^{Bh} - \id)b, 
\end{equation*}
where
\begin{equation*}
B \coloneqq -\lambda \bm 1 \bm w^\top  - \diag(\bm x),\quad \textup{and} \quad b \coloneqq \theta \bm 1 + \diag(\bm x) \bm v_0.
\end{equation*}

We now recall the simulation scheme for the SDE \eqref{eqn:VMarkovStochasticPart}. Note that the right-hand side of \eqref{eqn:VMarkovStochasticPart} is the same for all $i$. Thus, after multiplying \eqref{eqn:VMarkovStochasticPart} with $\bm w$, we get $$dY_t = \nu\overline{w} \sqrt{Y_t} \sdd W_t,\qquad Y_0 = \bm w ^\top \bm y,$$ where $\overline{w} \coloneqq \bm 1^\top \bm w.$ This is now a one-dimensional SDE, which was already studied in \citet{lileika2021second}, where a second-order simulation scheme was given. This scheme is based on matching the first {6 centralized moments (up to errors of order $O(h^3)$),} while preserving the non-negativity of $Y$. Define the quantities
\begin{align}
x &\coloneqq \bm w^\top \bm y,\qquad z \coloneqq \nu^2\overline{w}^2 h,\label{eqn:DefineX}\\
m_1 &\coloneqq x,\qquad m_2 \coloneqq x^2 + xz,\qquad m_3 \coloneqq x^3 + 3x^2z + \frac{3}{2}xz^2,\nonumber\\
p_1 &\coloneqq \frac{m_1x_2x_3 - m_2(x_2 + x_3) + m_3}{x_1(x_3-x_1)(x_2-x_1)},\nonumber\\
p_2 &\coloneqq \frac{m_1x_1x_3 - m_2(x_1 + x_3) + m_3}{x_2(x_3-x_2)(x_1-x_2)},\nonumber\\
p_3 &\coloneqq \frac{m_1x_1x_2 - m_2(x_1 + x_2) + m_3}{x_3(x_1-x_3)(x_2-x_3)},\nonumber\\
x_1 &\coloneqq x + \left(a + \frac{3}{4}\right)z - \sqrt{\left(3x + \left(a + \frac{3}{4}\right)^2z\right) z},\label{eqn:X1}\\
x_2 &\coloneqq x + a z,\label{eqn:X2}\\
x_3 &\coloneqq x + \left(a + \frac{3}{4}\right)z + \sqrt{\left(3x + \left(a + \frac{3}{4}\right)^2z\right) z},\label{eqn:X3}\\
a &\coloneqq \frac{3 + \sqrt{3}}{4}.\nonumber
\end{align}
Then, we define $\widehat Y_h$ to be the random variable which is $x_i$ with probability $p_i$, $i=1,2,3$, {noting, in particular, that $p_1+p_2+p_3 = 1$}.

We can now reconstruct an approximation $\widehat{\bm{Y}}$ from $\widehat Y$. Indeed, since the right-hand side of \eqref{eqn:VMarkovStochasticPart} is the same for all $i=1,\dots,N$, the solution of \eqref{eqn:VMarkovStochasticPart} must be of the form
\begin{equation}\label{eqn:BYForm}
Y^i_h = y^i + R,\qquad i = 1,\dots,N,
\end{equation}
for some scalar random variable $R$. Taking the inner product of \eqref{eqn:BYForm} with $\bm w$, we get $$Y_h = \bm w^\top \bm y + \overline{w}R,\quad \textup{implying}\quad R = \frac{Y_h - \bm w^\top \bm y}{\overline{w}}.$$ Hence, we set $$\widehat{S}(\bm y, h) \coloneqq \widehat{\bm Y}_h \coloneqq \bm y + \frac{\widehat Y_h - \bm w^\top \bm y}{\overline{w}}.$$

Finally, we use Strang splitting to get the scheme $$A^{\textup{CIR}}(\bm{v},h) \coloneqq D\left(\widehat{S}\left(D\left(\bm{v}, \frac{h}{2}\right), h\right), \frac{h}{2}\right)$$ for approximating $\bm{V}_h$ given $\bm{v}$. Therefore, we get a simulation algorithm $$\bm V^{N, M}_{t_{j+1}} \coloneqq A^{\textup{CIR}}(\bm V^{N, M}_{t_j}, t_{j+1} - t_j),\qquad j=0,\dots,M-1,$$ where $0 = t_0 < t_1 < \dots < t_M = T.$ The only problem that could occur is that the square root in \eqref{eqn:X1} or \eqref{eqn:X3} is not well-defined. However, note that if we can prove that $\bm V^{N,M}$ does not leave $\mathcal{D}$, where $\mathcal{D}$ is the same cone as in Theorem \ref{thm:SVEDomainTheorem}, then in particular, $x = \bm w^\top \bm y$ in \eqref{eqn:DefineX} will always be nonnegative, and hence the square roots in \eqref{eqn:X1} and \eqref{eqn:X3} are always well-defined. Proving that $\bm V^{N,M}$ stays in $\mathcal{D}$ is the aim of the following theorem.

\begin{theorem}
    Let $Q$ be an admissible matrix and let $\bm v_0$ be chosen according to \eqref{eqn:InitialConditionsCondition}. Then, for all $\bm v\in Q^{-1}\R_+^N$ and $h\ge 0$, the weak simulation algorithm $A^{\textup{CIR}}$ described above satisfies $A^{\textup{CIR}}(\bm v, h) \in Q^{-1}\R_+^N$. In particular, $A^{\textup{CIR}}$ is well-defined.
\end{theorem}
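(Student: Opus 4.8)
The plan is to exploit that the full step $A^{\textup{CIR}}(\bm v,h)=D\bigl(\widehat S(D(\bm v,h/2),h),h/2\bigr)$ is a composition, and to prove separately that the deterministic flow $\bm y\mapsto D(\bm y,h)$ and the randomized map $\bm y\mapsto\widehat S(\bm y,h)$ each send the cone $\mathcal D\coloneqq Q^{-1}\R_+^N$ into itself; invariance of the composition, hence the theorem, is then immediate. Since $\bm v_0$ has the form \eqref{eqn:InitialConditionsCondition}, the affine correction of Corollary~\ref{cor:SVEDomainTheoremArbitraryy0} vanishes and $\mathcal D$ coincides with the linear cone of Theorem~\ref{thm:SVEDomainTheorem}; all statements about $\widehat S$ are understood almost surely, i.e.\ for each realization $\widehat Y_h\in\{x_1,x_2,x_3\}$. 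Well-definedness will fall out on the way: once we know $D(\bm v,h/2)\in\mathcal D$, admissibility condition~2 in Definition~\ref{def:AdmissibleMatrix} gives $x=\bm w^\top D(\bm v,h/2)\ge 0$, so the radicand $(3x+(A+\tfrac34)^2z)z$ in \eqref{eqn:X1} and \eqref{eqn:X3} is nonnegative.

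\textbf{Step 1 (the deterministic flow preserves $\mathcal D$).} The map $\bm y\mapsto D(\bm y,h)$ is the time-$h$ solution of the linear ODE \eqref{eqn:VMarkovDeterministicPart}, which is exactly \eqref{eqn:SVEMarkovCompact} in the degenerate case $\sigma\equiv 0$, $b(y)=\theta-\lambda y$ — and these satisfy the linear growth condition together with the boundary conditions $b(0)=\theta\ge 0$, $\sigma(0)=0$ — with mean-reversion level $\bm v_0$ of the form \eqref{eqn:InitialConditionsCondition}. When $\sigma\equiv 0$ a weak solution of this equation is simply the unique solution of the ODE, so Theorem~\ref{thm:SVEDomainTheorem} applies and yields $D(\bm v,h)\in\mathcal D$ for every $\bm v\in\mathcal D$. (Equivalently, one re-runs the transformation argument of Section~\ref{S:proofDomainInvariance} with $\sigma\equiv 0$: after applying $Q$, the flow has linear part $-Q\diag(\bm x)Q^{-1}$ with nonnegative off-diagonal entries by admissibility condition~4, plus an inward-pointing drift along $\bm e_N$ with nonnegative coefficient at the face $\{z_N=0\}$, so $\R_+^N$ is forward invariant.)

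\textbf{Step 2 ($\widehat S$ preserves $\mathcal D$).} For $\bm y\in\mathcal D$ one has $\widehat S(\bm y,h)=\bm y+R\bm 1$ with $R=(\widehat Y_h-\bm w^\top\bm y)/\overline{w}$. Applying $Q$ and using admissibility conditions~3 ($Q\bm 1=\overline{w}\,\bm e_N$) and~2 ($\bm e_N^\top Q=\bm w^\top$) gives $Q\,\widehat S(\bm y,h)=Q\bm y+R\,\overline{w}\,\bm e_N$, which modifies only the $N$-th coordinate of $Q\bm y\in\R_+^N$, replacing $(Q\bm y)_N=\bm w^\top\bm y$ by $\bm w^\top\bm y+R\,\overline{w}=\widehat Y_h$. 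Hence $Q\,\widehat S(\bm y,h)\in\R_+^N$ as soon as $\widehat Y_h\ge 0$, which is precisely the nonnegativity preservation of the one-dimensional scheme of \citet{lileika2021second}: with $x=\bm w^\top\bm y=(Q\bm y)_N\ge 0$ and $z=\nu^2\overline{w}^2h\ge 0$ one has $x_2=x+Az\ge 0$ (as $A=(3+\sqrt3)/4>0$), $x_3\ge x_1$ trivially, and $x_1\ge 0$ by squaring the inequality $x+(A+\tfrac34)z\ge\sqrt{(3x+(A+\tfrac34)^2z)z}$ — both sides being nonnegative — down to $x^2+(2A-\tfrac32)xz\ge 0$, which holds since $2A-\tfrac32=\tfrac{\sqrt3}{2}>0$ and $x,z\ge 0$.

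Combining Steps~1 and~2 along the Strang composition gives $A^{\textup{CIR}}(\bm v,h)\in\mathcal D$ for all $\bm v\in\mathcal D$, and the well-definedness observation above then completes the proof. I expect the only genuinely delicate point to be Step~2: a priori $\widehat S$ perturbs $\bm y$ by a multiple of $\bm 1$ of \emph{arbitrary sign}, which need not remain in the cone, and it is only the combination of admissibility conditions~2--3 — forcing that perturbation to act on the last transformed coordinate alone, where it lands exactly at $\widehat Y_h$ — together with the built-in nonnegativity of the scalar step that rescues invariance. Step~1 is essentially a corollary of Theorem~\ref{thm:SVEDomainTheorem}, and all the remaining manipulations are one-line verifications.
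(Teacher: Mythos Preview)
Your proof is correct and follows essentially the same decomposition as the paper's: show separately that $D$ and $\widehat S$ each preserve $Q^{-1}\R_+^N$, using admissibility conditions~2--3 to see that $\widehat S$ only moves the last coordinate of $Q\bm y$ to $\widehat Y_h$, and the viability argument of Section~\ref{S:proofDomainInvariance} for $D$. The only cosmetic differences are that you invoke Theorem~\ref{thm:SVEDomainTheorem} directly for Step~1 (the paper re-runs the transformation and applies Lemma~\ref{L:invorthant}), and that you verify $x_1,x_2,x_3\ge 0$ by hand in Step~2 (the paper defers this to \citet{lileika2021second}).
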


\begin{proof}
    Given $\bm z,\bm y\in Q^{-1}\R_+^N$ and $h\ge 0$, we want to show that $D(\bm z, h)\in Q^{-1}\R_+^N$, and $\widehat S(\bm y, h)\in Q^{-1}\R_+^N$. This will prove the theorem.

    Consider first the algorithm $\widehat S$. Recall that $\widehat S(\bm y, h) = \bm y + R \bm 1$ for some scalar random variable $R$. We have to verify that $Q\widehat S(\bm y, h) = Q\bm y + RQ\bm 1\in\R_+^N$. The last component of this vector is given by $$(Q\widehat S(\bm y, h))_N = \bm w^\top  \bm y + R\overline w,$$ and we recall that this was given by the random variable $\widehat Y_h$ in Section \ref{sec:WellDefinitenessOfWeakScheme}, which by definition is nonnegative, as verified in \citet*{lileika2021second}. Conversely, for $i=1,\dots,N-1$, we have $$(Q\widehat S(\bm y, h))_i = (Q\bm y)_i + 0 \ge 0$$ by the assumption that $Q\bm y \in\R_N^+$. Hence, $\widehat S$ leaves the domain $Q^{-1}\R_+^N$ invariant.

    Next, consider the algorithm $D$. Recall that $D(\bm z, h)$ was given as the exact solution at time $h$ of the ODE $$\dd \bm Z_t = -\diag(\bm x) (\bm Z_t - \bm v_0)\sdd t + (\theta - \lambda \bm w^\top  \bm Z_t)\bm 1 \sdd t,\qquad \bm Z_0 = \bm z.$$ Note that due to \eqref{eqn:InitialConditionsCondition}, $\diag(\bm x) \bm v_0 = \mu\bm 1$ for some $\mu\ge 0$. Defining $\widetilde{\bm Z} \coloneqq Q\bm Z$, we have $$\dd \widetilde{\bm Z}_t = \left(-Q\diag(\bm x) Q^{-1}\widetilde{\bm Z_t} + (\theta + \mu - \lambda \widetilde{Z}^N_t)Q\bm 1 \right)\dd t,\qquad \widetilde{\bm Z}_0 = Q\bm z\in \R_+^N,$$ and we have to show that $\widetilde{\bm Z}_t \in \R_+^N$. We prove this by invoking Lemma~\ref{L:invorthant}, where we note that $$b(\bm z) = -Q\diag(\bm x) Q^{-1}\bm z + (\theta + \mu - \lambda z^N)Q\bm 1,\qquad \sigma\equiv 0.$$ In particular, we have to show that $b_i(\bm z) \ge 0$ for $\bm z\in\R_+^N$ with $z^i = 0$.

    We start with $i=N$. Here, we have
    \begin{align*}
        b_N(\bm z) &= -(Q\diag(\bm x) Q^{-1}\bm z)_N + (\theta + \mu)\overline w.
    \end{align*}
    Of course, $(\theta + \mu)\overline w \ge 0$. Moreover, due to Definition \ref{def:AdmissibleMatrix}, $-(Q\diag(\bm x)Q^{-1}\bm z)_N$ is a linear combination of $z^i$ where all the coefficients are nonnegative, with the exception of the coefficient of $z^N$. However, since $z^N = 0$, this implies that $b_N(\bm z) \ge 0$.

    Next, consider $i=1,\dots,N-1$. Then, $$b_i(\bm z) = -(Q\diag(\bm x)Q^{-1}\bm z)_i.$$ As before, $-(Q\diag(\bm x)Q^{-1}\bm z)_i$ is again a linear combination of the $z^j$, where all coefficients are nonnegative, with the exception of the coefficient of $z^i$. But since $z^i = 0$, this implies that $b_i(\bm z) \ge 0$, proving the theorem.
\end{proof}

\section{Solving PDEs}
\label{sec:solving_pdes}

As an application of the domain, we want to solve PDEs. Recall that the multifactor square-root process $\bm V^N$ is given by 
$$
\dd \bm V^N_t = -\diag(\bm x)\left(\bm V^N_t - \bm v_0\right)\sdd t + \left(\theta-\lambda\bm w^\top \bm V^N_t\right)\bm 1\sdd t + \nu\sqrt{\bm w^\top \bm V^N_t}\bm 1\sdd W_t,
$$ 
see \eqref{eqn:RHestonVolMarkovCompact}.
After a transformation of variables using $\bm Z \coloneqq Q\bm V^N$ and $\bm z_0 \coloneqq Q\bm v_0$, where $Q$ is the matrix in Theorem \ref{thm:SpecificQ}, we have $$\dd \bm Z_t = -Q\diag(\bm x)Q^{-1}\left(\bm Z_t - \bm z_0\right)\sdd t + \overline{w}\left(\theta-\lambda Z^{(N)}_t\right)\bm e_N\sdd t + \nu \overline{w}\sqrt{Z^{(N)}_t}\bm e_N\sdd W_t.$$

Let $f:\R^N_+\to\R$ be a ``nice'' payoff function. Then, we define the value function $u:\R^N_+\times[0,T]\to\R$, $$u(\bm z, t) \coloneqq \E\left[f(\bm Z_T)\big|\bm Z_t = \bm z\right].$$ Then, $u$ satisfies the PDE 
\begin{equation}
    \label{eq:PDE}
    \partial_t u - (\nabla u)^\top  Q\diag(\bm x)Q^{-1}(\bm z-\bm z_0) + \overline{w}\left(\theta-\lambda z_N\right)\partial_{z_N}u + \frac{1}{2}\nu^2\overline{w}^2z_N\partial^2_{z_N}u = 0
\end{equation}
with the boundary condition $u(\bm z, T) = f(\bm z)$, $\bm z\in\R_+^N$.

For numerical approximation, we then need to truncate the domain in space, and impose appropriate boundary conditions.
For simplicity, we will instead fabricate an appropriate source term such that the PDE has an explicit, given solution, which we then also impose as Dirichlet boundary condition on the boundary of the truncated domain.

Specifically, suppose that we want the exact solution to have the form
\[
    u(\mathbf{z}, t) = \widetilde{u}(\mathbf{z}, t) := 1 + \sum_{i=1}^N \alpha_i (z^i)^2 + \beta t, \quad \mathbf{z} \in \R^N_+,\ t \in [0,T].
\]
Plugging this formula into \eqref{eq:PDE}, we obtain a source term
\[
    \phi(\mathbf{z}) = \beta - 2 \sum_{i=1}^N \alpha_i z^i \sum_{j=1}^N g_{ij} (z^j - z_0^j) + 2 \alpha_N \overline{w} (\theta - \lambda z^N) z^N
    + \nu^2 \overline{w}^2 \alpha_N z^N,
\]
with $g_{ij} = (Q\diag(\bm x)Q^{-1})_{ij}$, i.e., $u$ satisfies
\[
    \partial_t u - (\nabla u)^\top  Q\diag(\bm x)Q^{-1}(\bm z-\bm z_0) + \overline{w}\left(\theta-\lambda z_N\right)\partial_{z_N}u + \frac{1}{2}\nu^2\overline{w}^2z_N\partial^2_{z_N}u = \phi,
\]
now with the terminal condition $u(\mathbf{z}, T) = \widetilde{u}(\mathbf{z},T)$.
The precise parameters chosen are summarized in Table~\ref{tab:rheston_params}. In dimension $N=2$, we choose the admissible matrix $Q$ given by Example~\ref{ex:InvariantDomainDimension2}.
We furthermore choose $\mathbf{\alpha} = (3,\, 4)$, $\beta = 1.6$, and the terminal time $T=2$.

\begin{table}[]
    \centering
    \begin{tabular}{ccccccc}
         $N$ & $\theta$ & $\lambda$ & $\nu$ & $\mathbf{x}$ & $\mathbf{w}$ & $\mathbf{v}_0$ \\
         \hline
          $2$ & $0.8$ & $1.2$ & $0.7$ & $(0.1,\, 3.5)$ & $(0.4,\, 1.8)$ & $(0.2,\, 0.3)$ \\
          $3$ & $0.8$ & $1.2$ & $0.7$ & $(0.1,\, 3.5,\, 4.1)$ & $(0.4,\, 1.8,\, 2.1)$ & $(0.2,\, 0.3,\, 0.4)$
    \end{tabular}
    \caption{Parameters of the stochastic volatility of the lifted rough Heston model used for the numerical example.}
    \label{tab:rheston_params}
\end{table}

After truncation of the domain, we solve the PDE by the finite element method, using the package FEniCSx, see \citet{BarattaEtal2023}, and compare against the exact solution $\widetilde{u}$.
In Table~\ref{tab:fem-error} we present the $L^2$-errors on the truncated domain for three choices of truncated domains, each of side-length $4$:
\begin{enumerate}
    \item \label{enum:1} $D = [0,4]^2$, corresponding to a truncation in $v$-space which respects the cone-shaped actual domain of the process;
    \item \label{enum:2} $D = [-0.5,3.5]^2$ corresponding to a truncation in $v$-space, which neither respects the cone-shaped actual domain, nor the non-negativity condition;
    \item \label{enum:3} $D = [-0.5,3.5] \times [0,4]$ corresponding to a domain truncation, which does not respect the cone-shaped actual domain in $v$-space, but does respect the non-negativity.
\end{enumerate}
 We use first order Lagrange-type finite elements, with $n_t$ time-steps as well as mesh-size $n_x = n_t$ in each space dimension. (We refer to \url{https://github.com/bayerc2/domain_multifactor_volterra} for more details.)

 We would like to emphasize that, while it might seem trivial to choose $[0,4]^2$ as the domain instead of, say, $[-0.5,3.5] \times [0,4]$, this choice is based on correctly identifying the matrix $Q$ and the domain $\mathcal{D} = Q^{-1}\mathbb{R}_+^2$, which is appropriately truncated here to $Q^{-1}[0,4]^2$. Without knowledge of $Q$, one would need to guess $\mathcal{D}$ to truncate the domain, a task that becomes increasingly nontrivial in higher dimensions.

{\renewcommand{\arraystretch}{1.2}
\begin{table}[]
    \centering
    \begin{tabular}{r|c|c|c}
        & \multicolumn{3}{|c}{$L^2$-error over the domain $D$}\\
         $n$ & $D = [0,4]^2$ & $D = [-0.5, 3.5]^2$ & $D = [-0.5,3.5] \times [0,4]$ \\
         \hline 
         $4$ & $7.3 \times 10^{1}$ & $3.0 \times 10^{3}$ & $5.5 \times 10^{1}$ \\
         $8$ & $1.4 \times 10^{1}$ & $7.7 \times 10^{1}$ & $1.5  \times 10^{1}$ \\
         $16$ & $3.2 \times 10^{0}$ & $2.2 \times 10^{10}$ & $3.3 \times 10^{0}$ \\
         $32$ & $7.5 \times 10^{-1}$ & $1.5 \times 10^{80}$ & $8.0 \times 10^{-1}$ \\
         $64$ & $1.8 \times 10^{-1}$ & $1.4 \times 10^{50}$ & $2.0 \times 10^{-1}$ \\
         $128$ & $4.6 \times 10^{-2}$ & inf & $4.9 \times 10^{-2}$ \\
         $256$ & $1.1 \times 10^{-2}$ & inf & $1.2 \times 10^{-2}$ \\
         $512$ & $2.9 \times 10^{-3}$ & inf & $3.0 \times 10^{-3}$ \\
         $1024$ & $7.2 \times 10^{-4}$ & inf & $7.6 \times 10^{-4}$ \\
    \end{tabular}
    \caption{$L^2$ errors over the truncated domain for the approximate FEM solution to the PDE for $n_t = n_x = n$ in dimension $N=2$.}
    \label{tab:fem-error}
\end{table}
}

When non-negativity of the variance process is preserved (cases \ref{enum:1} and \ref{enum:3}), the numerical method empirically exhibits second order convergence, with slightly smaller error when the computational domain is a subset of the invariant domain of the process (case \ref{enum:1}).
On the other hand, when non-negativity of the variance process is not preserved on the computational domain (case \ref{enum:2}) the error explodes due to the instability of the heat equation backward in time.

{We also provide an example in dimension $N = 3$. In this case, we follow the construction outlined in Appendix~\ref{sec:InvariantDomainDimension3}. Note that the construction is not unique, so we tested different solutions to the system of inequalities leading to different admissible $Q$. Specifically, we choose
\begin{itemize}
    \item $a=1$, $b=2$, the default choice suggested in Section~\ref{sec:InvariantDomainDimension3}.
    \item $a = 2.04$, $b=0.88$, an admissible choice obtained by minimizing $\left\lVert Q \diag(\mathbf{x}) Q^{-1} \right\rVert$ -- related to the Lipschitz constant of the transformed drift -- over all admissible choices of parameters $a,b$.
    \item $a = 0.51$, $b = -0.03$, obtained by maximizing $\left\lVert Q \diag(\mathbf{x}) Q^{-1} \right\rVert$.
\end{itemize}
It turns out that there was no significant difference in the numerical results based on the different choices of transformation. Hence, we only report the results for the first choice. We report our numerical results in Table~\ref{tab:fem-error3}. Note that we had to limit the grid-sizes due to the severely increased computational time. Hence, the accuracies reported may seem disappointing. However, note that the $L^2$-error is un-normalized here. A normalized error would be obtained by dividing by the total volume of the (computational) domain, which is $4^3 = 64$ in this case. We observe an expected error decay when the domain is respected, and highly erratic, diverging behavior when positivity is not preserved.
}

{{\renewcommand{\arraystretch}{1.2}
\begin{table}[]
    \centering
    \begin{tabular}{r|c|c}
        & \multicolumn{2}{|c}{$L^2$-error over the domain $D$}\\
         $n$ & $D = [0,4]^3$ & $D = [-0.5, 3.5]^3$ 
         \\
         \hline 
         $4$ & $4.3 \times 10^{3}$ & $2.2 \times 10^{1}$ 
         \\
         $8$ & $3.2 \times 10^{2}$ & $1.3 \times 10^{3}$ 
         \\
         $16$ & $4.8 \times 10^{1}$ & $2.6 \times 10^{13}$ 
         \\
         $32$ & $8.5 \times 10^{0}$ & $1.6 \times 10^{24}$ 
         \\
         $64$ & $1.8 \times 10^{0}$ & $3.1 \times 10^{2}$ 
         \\
         $128$ & $4.0 \times 10^{-1}$ & $3.1 \times 10^{2}$ 
         \\
    \end{tabular}
    \caption{$L^2$ errors over the truncated domain for the approximate FEM solution to the PDE for $n_t = n_x = n$ in dimension $N=3$.}
    \label{tab:fem-error3}
\end{table}
}
}

\section{Remark on the link between the sets $\mathcal E$ and $\mathcal G$}
\label{A:link}
In this section, we argue that the two abstract `invariance' sets that appeared in the literature in \citet*{abi2019markovian, cuchiero2020generalized} are equal.  This part is valid for more general locally square-integrable kernels  $K$ beyond the weighted sum of exponential case.

We introduce the following notations. For suitable functions $f,g$ and measure $L$ we denote their convolution by $*$: 
$$ (f*g)(t) = \int_0^t f(t-s)g(s) \dd s = \int_0^t f(s)g(t-s) \dd s , \quad (f*L)(t):= \int_0^t f(t-s) L(\dd s). $$
The shift operator $\Delta_h$ with $h\ge0$, maps any function $f$ on $\R_+$ to the function $\Delta_h f$ given by
\[
\Delta_h f(t) = f(t+h).
\]
If the function $f$ on $\R_+$ is right-continuous and of locally bounded variation, the measure induced by its distributional derivative is denoted $\dd f$, so that $f(t) = f(0) + \int_{[0,t]} \dd f(s)$ for all $t\ge0$. By convention, $df$ does not charge $\{0\}$.\\

 Two sets appeared so far in the literature to characterize the non-negativity of solutions to stochastic Volterra equations:
\begin{enumerate}
    \item Set of     \citet[Equation (4.7), Definition 4.12 and Theorem 4.17(i)]{cuchiero2020generalized}:\footnote{We point out that in \cite[Equation~(4.7)]{cuchiero2020generalized}, 
 the invariance condition is expressed on the initial values of the 
Markovian lift~$\lambda_0$ of the Volterra process. In our setting, we 
formulate the invariance property directly in terms of the Volterra 
process itself, with input curve~$g_0$. The connection between the two sets is made when one considers input curves of the form
$g_0(t) = \langle g, \mathcal{S}_t^* \lambda_0 \rangle,$ with the notations of  \cite[Equation~(4.7)]{cuchiero2020generalized}.}
\begin{align*}
    \mathcal E = \bigcap_{\eta>0} \mathcal E^{\eta} \text{ with } \mathcal E^{\eta}:=  \left\{ g_0:[0,T]\to \R \text{ such that }  g_0 - R^{\eta}*g_0 \geq 0 \right\}.
\end{align*}
Here $R^{\eta}(t)$ is the resolvent of the second kind of the kernel $(\eta K)$ defined by 
\begin{align*}
    R^{\eta} = \eta K - \eta K*R^{\eta}= \eta K- R^{\eta}*\eta K.
\end{align*}
\item 
Set of \citet[Equations (2.4)-(2.5) and Theorem 2.1]{abi2019markovian}: 
\begin{align*}
    \mathcal G = \big\{  g_0:&[0,T]\to \R \text{ such that } \\
    &\Delta_h g_0 - (\Delta_h K*L)(0) g_0 -  d(\Delta_h K*L)*g_0 \geq 0   \text{ and } g_0(0) \geq 0. \big\}
\end{align*}
  where $L(\dd t)$ is the resolvent of the first kind of the kernel\footnote{Under some suitable assumptions on the kernel, see \cite[Assumption (H1)]{abi2019markovian},  one can show that $K$ admits a resolvent of the first kind such that  $\Delta_h K * L$ is right-continuous and of locally bounded variation, see \cite[Remark B.3]{abi2019markovian}, thus the
associated measure $d(\Delta_h K * L)$ that appears in the set $\mathcal G$ is well defined.} 
  \begin{align*}
    K*L = 1 = L*K.  
  \end{align*}
\end{enumerate}
  One can argue that the two sets are equal: 
  $$ \mathcal E = \mathcal G $$
since both conditions that appear in the set are necessary and sufficient conditions for  the non-negativity of the  linear Volterra equation 
\begin{align}\label{eq:Volterralinear}
    f^{\eta} = g_0 - \eta K*f^{\eta}, 
\end{align}
for $\eta>0$. Indeed, on the one hand the solution of \eqref{eq:Volterralinear} can be expressed in terms of the resolvent of the second kind in the form 
  $$ f^{\eta} = g_0 - R^{\eta}*g_0,$$
  which is exactly the form that appears in $\mathcal E$. On the other hand, by relying on the properties of the resolvent of the first kind, see for instance \citet[the proof of Theorem A.2]{abi2019markovian}, one can write that
  \begin{equation}\label{eq:feta}
      \begin{aligned}
    f^{\eta}(t+h) &=  \Delta_h g_0(t) - (\Delta_h K*L)(0) g_0(t) -  (d(\Delta_h K*L)*g_0)(t)  \\
    &\quad   + (\Delta_h K*L)(0) f^{\eta} (t) +  (d(\Delta_h K*L)*f^{\eta})(t)  \\
    &\quad   - \eta \int_t^{t+h} \Delta_h K(t-s) f^{\eta} (s) \dd s.
\end{aligned}
  \end{equation}
We note that the first line is exactly the condition that appears in the set $\mathcal G$. 
{We sketch why $g_0 \in \mathcal{G}$ is equivalent to the nonnegativity of $f^{\eta}$ for all $\eta > 0$, under suitable assumptions on the nonnegative kernel $K$ (for instance, when $K$ is a weighted sum of exponentials; see \cite[$(H_0)$ and $(H_1)$]{abi2019markovian} for precise conditions). The implication $\Rightarrow$ follows from \cite[Theorem~A.2]{abi2019markovian} with $b(x) = -\eta x$ and $\sigma(x) = 0$ therein.   For the converse direction, fix $\eta > 0$ and assume that $f^{\eta} \geq 0$. Clearly, $g_0(0) = f^{\eta}(0) \geq 0$, and from~\eqref{eq:feta} we have
\begin{align*}
\Delta_h g_0(t) 
&- (\Delta_h K * L)(0)\, g_0(t) 
- (d(\Delta_h K * L) * g_0)(t) \\
&\geq 
- (\Delta_h K * L)(0)\, f^{\eta}(t) 
- (d(\Delta_h K * L) * f^{\eta})(t) \\
&\quad 
+ \eta \int_t^{t+h} \Delta_h K(t-s)\, f^{\eta}(s)\, \dd s \\
&\geq 
- (\Delta_h K * L)(0)\, f^{\eta}(t) 
- (d(\Delta_h K * L) * f^{\eta})(t).
\end{align*}
To conclude that $g_0 \in \mathcal{G}$, it suffices to note that the right-hand side tends to zero as $\eta \to \infty$, since $f^{\eta} \to 0$ in this limit (this follows from the Laplace transform $\widehat{f^{\eta}}$ of~\eqref{eq:Volterralinear}, given by
$\widehat{f^{\eta}} = \frac{\widehat{g_0}}{1 + \eta \widehat{K}}$, which goes  to $0$ as $\eta \to \infty$). 
}

In conclusion,  the non-negativity of the linear Volterra equation \eqref{eq:Volterralinear}, for any $\eta>0$, is equivalent to the condition that appears in $\mathcal E$ as well as the one that appears in $\mathcal G$, which shows that the two sets are equal. \\

In principle, to establish a link with our cone $\mathcal D$, one should restrict to kernels that are weighted sums of exponentials of the form 
$$ 
K(t) = \sum_{i=1}^N w_i e^{-x_i t},
$$ 
and input curves of the form 
$$ 
g_0(t) = \sum_{i=1}^N w_i e^{-x_i t} Y_0^i.
$$

Then, the resolvents of the second kind and first kind for such kernels must be computed and plugged into the conditions defining the sets $\mathcal{E}$ and $\mathcal{G}$. Even in dimension $N=2$, this leads to highly cumbersome and non-trivial computations, and it is not clear how to explicitly determine a suitable domain, as for instance our cone $\mathcal D$, for $Y_0^i$ from $\mathcal{E}$ and $\mathcal{G}$. This makes the approach in the current paper particularly crucial.

\appendix

\section{Invariant domains in dimension $N = 3$}
\label{sec:InvariantDomainDimension3}
We extend the calculations presented for $N=2$ in Example~\ref{ex:InvariantDomainDimension2} to the three-dimensional case. We are looking for a matrix of the form
    \begin{equation}\label{eqn:QDimension3}
    Q = \begin{pmatrix}
        a_1 & a_2 & -a_1-a_2\\
        b_1 & b_2 & -b_1-b_2\\
        w_1 & w_2 & w_3
    \end{pmatrix}.
    \end{equation}
    Note that there are some scaling invariances in the equation $Q\bm x\in\R_+^N$. We may multiply rows of $Q$ with positive (!) constants without changing this condition. Hence, we restrict ourselves to $$Q = \begin{pmatrix}
        1 & -a & -1+a\\
        1 & b & -1-b\\
        w_1 & w_2 & w_3
    \end{pmatrix}.$$
    Note that this corresponds to the assumption that $a_1$ and $b_1$ in \eqref{eqn:QDimension3} are both positive. Indeed, if we chose one of these entries to be $0$ or $-1$, we would fail to find an appropriate matrix $Q$.
    
    Define the matrix $R\coloneqq -\overline w Q\diag(\bm x)Q^{-1}$, where we denote $R = (r_{i,j})_{i,j=1}^N$, and set $y_1\coloneqq x_2-x_1$ and $y_2 \coloneqq x_3-x_2$. Then,
    \begin{align*}
        r_{1,3} &= y_1 + y_2 - ay_2,\\
        r_{2,3} &= y_1 + y_2 + by_2,\\
        r_{3,2} &= \frac{(w_1w_2y_1 + w_1w_3(y_1+y_2))a - w_1w_2y_1 + w_2w_3y_2}{a+b},\\
        r_{3,1} &= \frac{(w_1w_2y_1 + w_1w_3(y_1+y_2))b + w_1w_2y_1 - w_2w_3y_2}{a+b},\\
        r_{1,2} &= \frac{w_1y_2a^2 + (w_3y_1 + w_2(y_1+y_2) - w_1y_2)a - w_2(y_1+y_2)}{a+b},\\
        r_{2,1} &= \frac{-w_1y_2b^2 + (w_3y_1 + w_2(y_1+y_2) - w_1y_2)b + w_2(y_1+y_2)}{a+b},
    \end{align*}
    and all these quantities have to be non-negative. Assume now further that $a,b\ge 0$. Then, $r_{2,3} \ge 0$ is trivially satisfied, and $r_{1,3},r_{3,2},r_{3,1},r_{1,2},r_{2,1} \ge 0$ simplify to 
    \begin{align*}
        a &\le \frac{y_1 + y_2}{y_2},\\
        a &\ge \frac{w_2}{w_1}\frac{w_1y_1 - w_3y_2}{w_2y_1 + w_3(y_1+y_2)},\\
        b &\ge \frac{w_2}{w_1}\frac{-w_1y_1 + w_3y_2}{w_2y_1 + w_3(y_1+y_2)},\\
        0 &\le w_1y_2a^2 + ca - w_2(y_1+y_2),\\
        0 &\ge w_1y_2b^2 - cb - w_2(y_1+y_2),
    \end{align*}
    where $c \coloneqq w_3y_1 + w_2(y_1+y_2) - w_1y_2.$

    This further implies for $a$ that
    \begin{align*}
        \frac{-c + \sqrt{c^2 + 4w_1w_2y_2(y_1+y_2)}}{2w_1y_2} \lor \frac{w_2}{w_1}\frac{w_1y_1 - w_3y_2}{w_2y_1 + w_3(y_1+y_2)} \le a \le \frac{y_1 + y_2}{y_2}.
    \end{align*}
    One can verify that the lower bound is always smaller than the upper bound, proving that such an $a$ exists. However, it is slightly simpler and perhaps more illustrative to prove that $a = 1$ satisfies these inequalities. For the upper bound, this is trivial. For the lower bound, note that $$\frac{w_2}{w_1}\frac{w_1y_1 - w_3y_2}{w_2y_1 + w_3(y_1+y_2)} \le \frac{w_2}{w_1}\frac{w_1y_1}{w_2y_1} = 1,$$ and 
    \begin{align*}
        \frac{-c + \sqrt{c^2 + 4w_1w_2y_2(y_1+y_2)}}{2w_1y_2} &\le 1\\
        \Longleftrightarrow \sqrt{c^2 + 4w_1w_2y_2(y_1+y_2)} &\le 2w_1y_2 + c\\
        \Longleftrightarrow c^2 + 4w_1w_2y_2(y_1+y_2) &\le c^2 + 4w_1^2y_2^2 + 4w_1y_2c\\
        \Longleftrightarrow w_2(y_1+y_2) &\le w_1y_2 + c.
    \end{align*}
    which follows immediately from the definition of $c$.

    Next, for $b$ we get the conditions
    \begin{align*}
        0 \lor \frac{w_2}{w_1}\frac{-w_1y_1 + w_3y_2}{w_2y_1 + w_3(y_1+y_2)} \le b \le \frac{c + \sqrt{c^2 + 4w_1w_2y_2(y_1+y_2)}}{2w_1y_2}.
    \end{align*}
    This time, we verify that $b = \frac{w_2}{w_1}$ is admissible. For the lower bound, this is clear. For the upper bound, note that
    \begin{align*}
        \frac{w_2}{w_1} &\le \frac{c + \sqrt{c^2 + 4w_1w_2y_2(y_1+y_2)}}{2w_1y_2}\\
        \Longleftrightarrow 2w_2y_2 - c &\le \sqrt{c^2 + 4w_1w_2y_2(y_1+y_2)}\\
        \Longleftarrow c^2 + 4w_2^2y_2^2 - 4w_2y_2c &\le c^2 + 4w_1w_2y_2(y_1+y_2)\\
        \Longleftrightarrow w_2y_2 - c &\le w_1(y_1+y_2).
    \end{align*}
    This again follows from the definition of $c$.

    Hence, we have shown that we can choose $a = 1$ and $b = \frac{w_2}{w_1}$, yielding $$Q = \begin{pmatrix}
        1 & -1 & 0\\
        1 & \frac{w_2}{w_1} & -1-\frac{w_2}{w_1}\\
        w_1 & w_2 & w_3
    \end{pmatrix}.$$ Note that due to scaling invariance, the matrix $$Q = \begin{pmatrix}
        w_1 & -w_1 & 0\\
        w_1 & w_2 & -w_1-w_2\\
        w_1 & w_2 & w_3\\
    \end{pmatrix}$$ would be equivalent.

    Consider now the specific example $\bm x \coloneqq (1, 5, 25)$ and $\bm w \coloneqq (1, 2, 3).$ Then, we get the conditions
    \begin{align*}
        0.84 \approx \frac{-5 + \sqrt{85}}{5} &\le a \le \frac{6}{5} = 1.2,\\
        1.4 = \frac{7}{5} &\le b \le \frac{5 + \sqrt{85}}{5} \approx 2.84.
    \end{align*}
    Comparing to the previous discussion, we see that indeed, $a = 1$ and $b=\frac{w_2}{w_1} = 2$ are admissible.

The corresponding plots for the multifactor square-root process are shown in Figure \ref{fig:InvariantDomainDimension3}. We give projections to two-dimensional planes, as this makes it easier to visually verify that the samples lie in $\R_+^3$. Furthermore, we give three different choices of $(a, b)$, the first two being admissible, and the third not. Indeed, we see for the first two choices that the samples lie in $\R_+^3$, while this is not the case for the third choice.

\begin{figure}[h!]
\centering
\begin{minipage}{.33\textwidth}
  \centering
  \includegraphics[width=\linewidth]{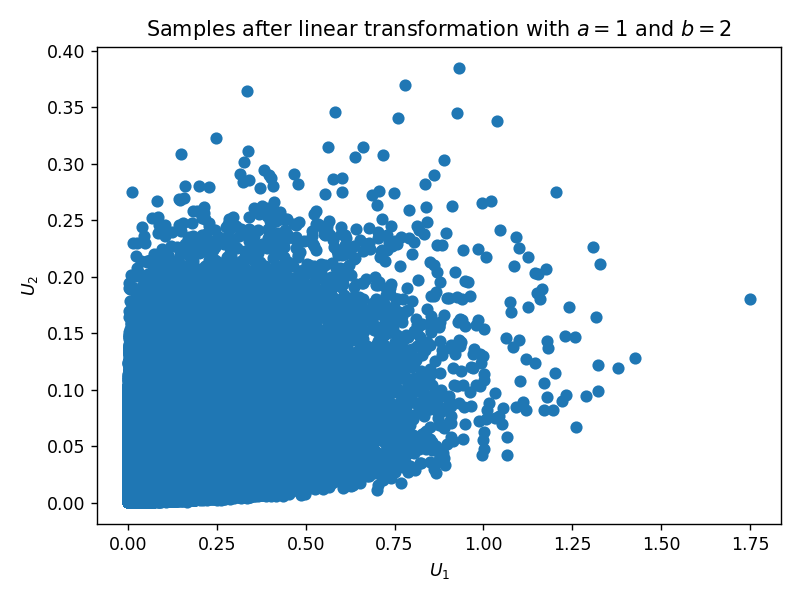}
\end{minipage}%
\begin{minipage}{.33\textwidth}
  \centering
  \includegraphics[width=\linewidth]{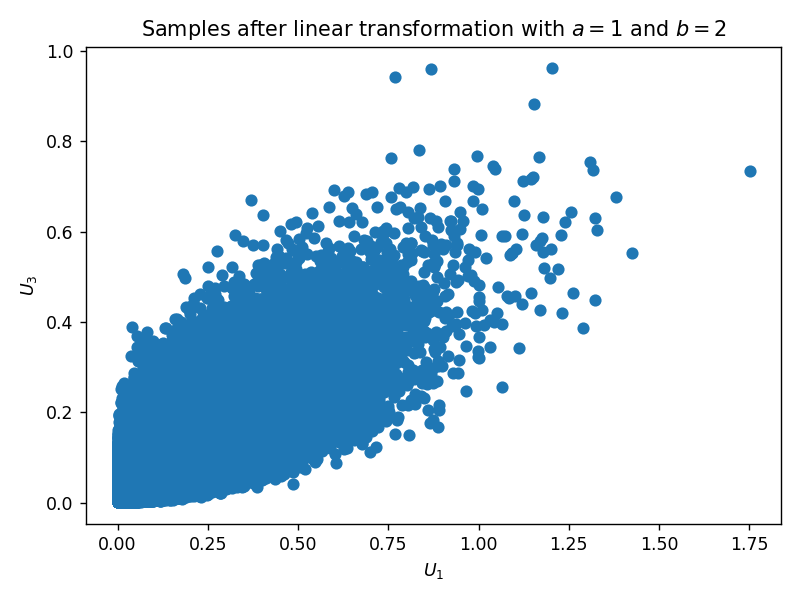}
\end{minipage}%
\begin{minipage}{.33\textwidth}
  \centering
  \includegraphics[width=\linewidth]{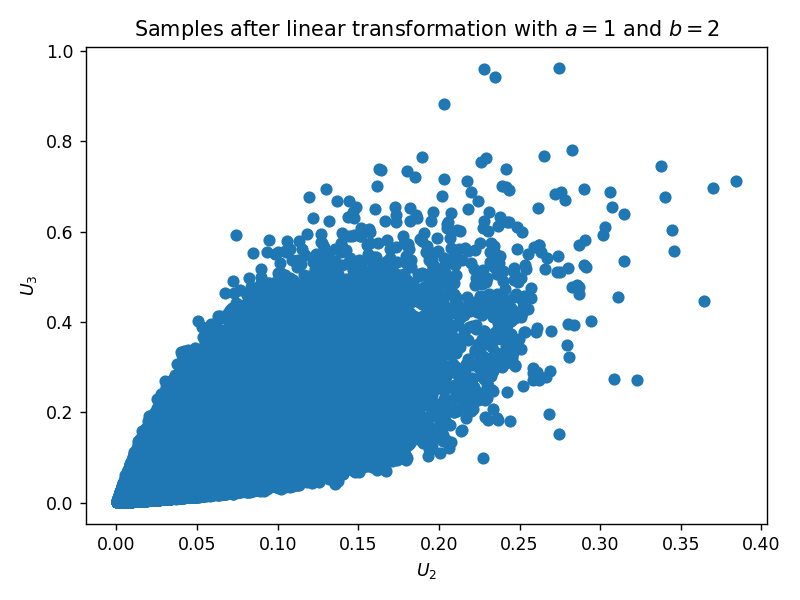}
\end{minipage}

\begin{minipage}{.33\textwidth}
  \centering
  \includegraphics[width=\linewidth]{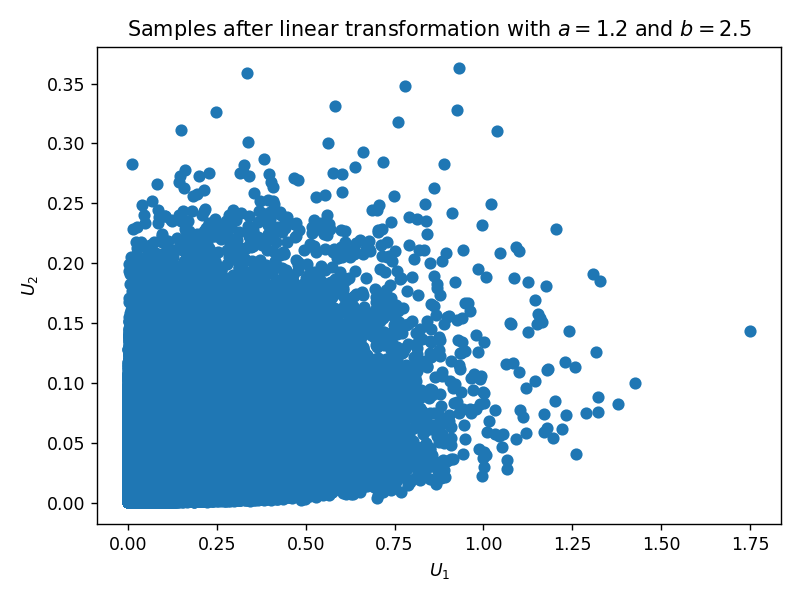}
\end{minipage}%
\begin{minipage}{.33\textwidth}
  \centering
  \includegraphics[width=\linewidth]{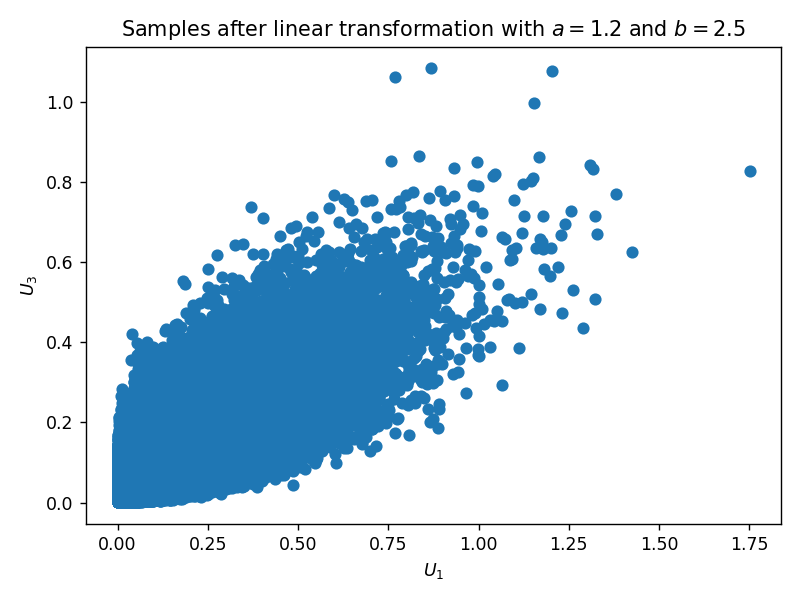}
\end{minipage}%
\begin{minipage}{.33\textwidth}
  \centering
  \includegraphics[width=\linewidth]{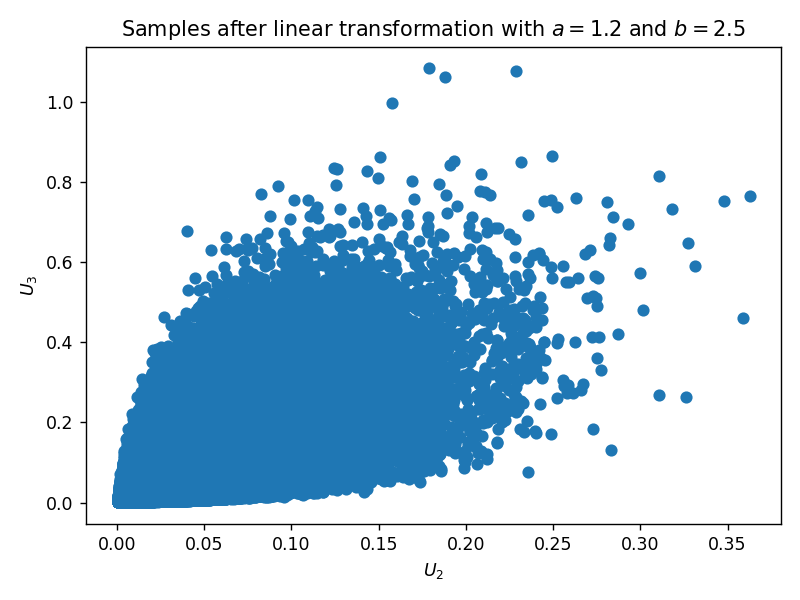}
\end{minipage}

\begin{minipage}{.33\textwidth}
  \centering
  \includegraphics[width=\linewidth]{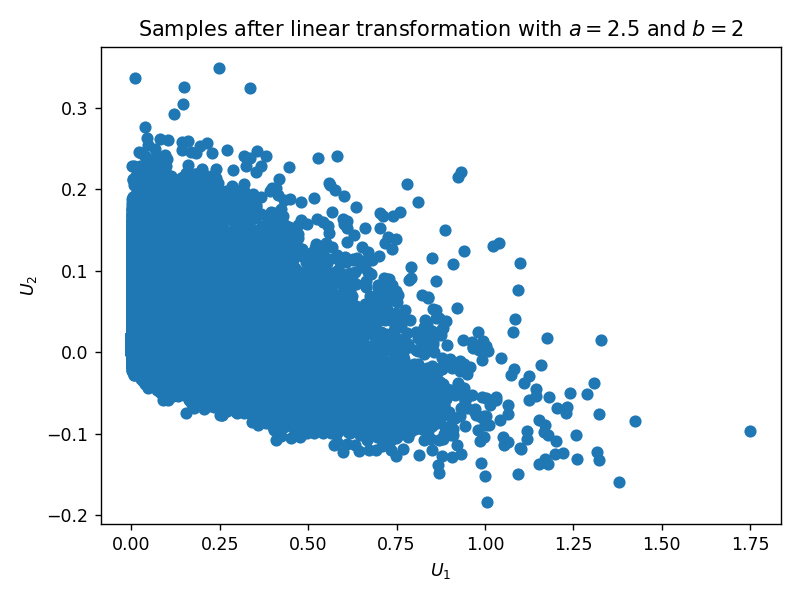}
\end{minipage}%
\begin{minipage}{.33\textwidth}
  \centering
  \includegraphics[width=\linewidth]{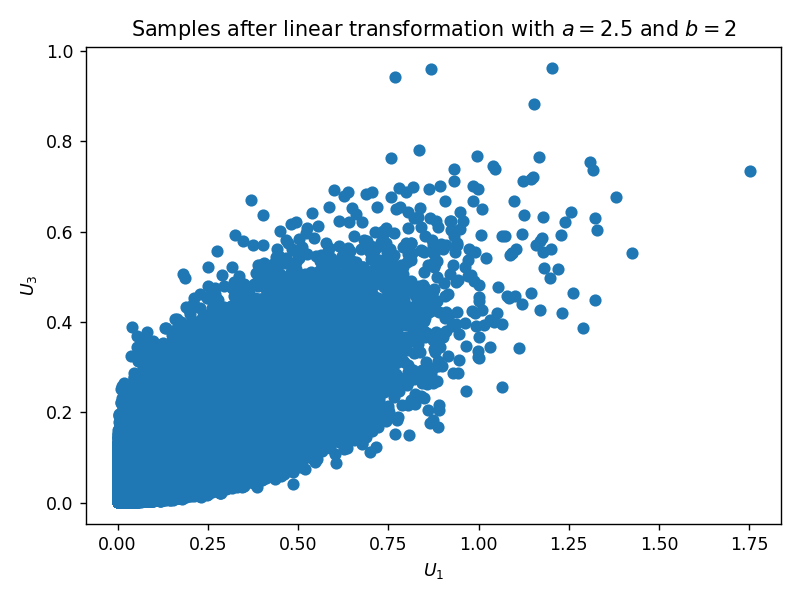}
\end{minipage}%
\begin{minipage}{.33\textwidth}
  \centering
  \includegraphics[width=\linewidth]{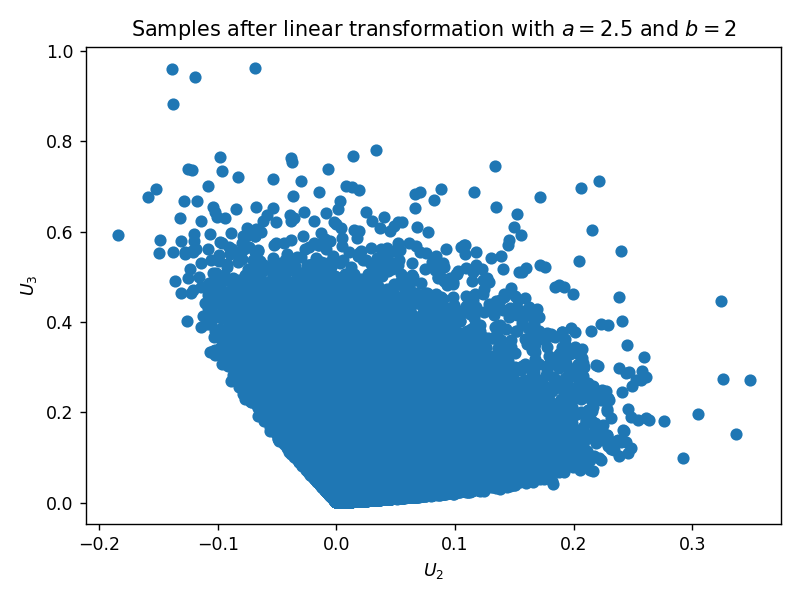}
\end{minipage}
\caption{{Samples of projections of $\bm U =  Q\bm V^3$  for the case of the multifactor square-root process \eqref{eqn:RHestonVolMarkovCompact} with $N=3$,} using $10^3$ sample paths on a time grid with $M=10^5$ time steps. The parameters used are $\bm x = (1, 5, 25), \bm w = (1, 2, 3), \lambda=0.3, \nu=0.3, V_0=0.02, \theta=0.02, T=100,$ and $\bm v_0$ is chosen to be proportional to $\bm x^{-1}$.}
\label{fig:InvariantDomainDimension3}
\end{figure}

\bibliographystyle{plainnat}
\bibliography{literature.bib}

\end{document}